\def\N{\mathbb{N}}
\def\Z{\mathbb{Z}}
\def\Q{\mathbb{Q}}
\def\a{\alpha}
\def\cT{\mathcal{T}}
\def\cF{\mathcal{F}}
\def\bd{\mathbf{d}}
\def\m1{\mathbf{1}}
\def\b{\beta}
\def\up{\underline{p}}
\def\O{\mathcal{O}}
\def\ul{\underline{\lambda}}
\def\ga{\gamma}
\def\C{\mathbb{C}}
\def\Im{\text{Im}}
\def\tto{\twoheadrightarrow}
\newtheorem{theo}{\bf{Theorem}}[section]
\newtheorem{lem}[theo]{Lemma}
\newtheorem{prop}[theo]{Proposition}
\newtheorem{conj}[theo]{Conjecture}
\numberwithin{equation}{section}
\title[Kac polynomials for canonical algebras]{Kac polynomials for canonical algebras}
\author{Pierre-Guy Plamondon et Olivier Schiffmann$^\dag$}
\thanks{$\dag$ : partially supported by ANR grant 13-BS01-0001-01}
\date{}
\begin{document}

\begin{abstract}
We prove that the number of geometrically indecomposable representations of fixed dimension vector $\mathbf{d}$ of a canonical algebra $\mathbf{C}$ defined over a finite field $\mathbf{F}_q$ is given by a polynomial in $q$ (depending  on $\mathbf{C}$ and $\mathbf{d}$). We prove a similar result for squid algebras. Finally we express the volume of the moduli stacks of representations of these algebras of a fixed dimension vector in terms of the corresponding Kac polynomials.
\end{abstract}

\maketitle

\vspace{.1in}

\section{Introduction}

In the early 80s, Kac showed that the number of isomorphism classes of absolutely indecomposable representations of a fixed dimension vector $\mathbf{d}$ of a quiver $Q$ over a finite field $\mathbb{F}_q$ is given by the evaluation at $t=q$ of a polynomial $A_{Q,\mathbf{d}}(t) \in \mathbb{Z}[t]$ (depending only on $Q$ and $\mathbf{d}$). This is especially surprising since absolutely indecomposable representations only form a constructible substack of the stack $\textbf{Rep}_{\mathbf{d}}(Q)$ of all representations of $Q$ of dimension $\mathbf{d}$. It was conjectured by
Kac and proved much later by Letellier, Hausel and Rodriguez-Villegas that $A_{Q,\mathbf{d}}(t) \in \mathbb{N}[t]$
(see \cite{K2}, \cite{LHRV}).
When $Q$ has no edge loops, the polynomial $A_{Q,\mathbf{d}}(t)$ carries a lot of representation-theoretic information related to the Kac-Moody Lie algebra $\mathfrak{g}_{Q}$ attached to $Q$ (see \cite{K2}, \cite{HauselKac}).

The aim of this note is to generalize Kac's theorem to the case of a certain family of quivers \textit{with relations}
which play an important role in the representation theory of finite-dimensional algebras. These are the so-called \textit{canonical algebras} introduced by Ringel in \cite{RingelCanonical}. Note that these algebras are of global dimension \textit{two} rather than one, hence the original arguments
of Kac (\cite{K2}) or Hua (\cite{Hua}) fail. On the other hand, these algebras are derived equivalent to certain categories of global dimension one, namely the categories of coherent sheaves on weighted projective lines
introduced by Geigle and Lenzing (see \cite{GL}), for which an analog of Kac's theorem does hold (see \cite[Section 7]{SHiggs} ). Our argument is based on the existence of such derived equivalences, and in fact our main result extends to an arbitrary \textit{almost concealed canonical algebra}, i.e. to the endomorphism algebra of any tilting sheaf on a weighted projective line (for instance, it applies to squid algebras). We refer to Section~3 for precise definitions.

\begin{theo}\label{T:main2} For any almost concealed-canonical algebra $C$ defined over a finite $field k$ and any class $\mathbf{d} \in K^+_0(mod(C))$ there exists a unique polynomial $A_{\mathbf{d},C} \in \Z[x]$ such that for any finite field extension $l$, the number of absolutely indecomposable representations of dimension $\mathbf{d}$ of $C$ over $l$ is equal to $A_{\mathbf{d}, C}(|l|)$. Moreover these polynomials only depend on the combinatorial type of $C$.
\end{theo}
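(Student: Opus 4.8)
The strategy is to transport the counting problem, via a tilting equivalence, to the category of coherent sheaves on a weighted projective line, where the analogous polynomiality statement is available by \cite[Section~7]{SHiggs}. By definition $C\cong\End_{\X}(T)$ for some tilting sheaf $T$ on a weighted projective line $\X$ defined over $k$. Since $T$ is already defined over $k$, the pair $(\X,T)$ base-changes to every finite extension $l/k$, and $\mathrm{R}\Hom_{\X}(T,-)$ furnishes a $k$-linear triangulated equivalence $\Phi\colon D^{b}(\bCoh(\X))\xrightarrow{\ \sim\ }D^{b}(mod(C))$ compatible with these base changes and inducing an isomorphism $\phi\colon K_0(\bCoh(\X))\xrightarrow{\ \sim\ }K_0(mod(C))$. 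Hence, for each $l$, $\Phi$ restricts to a class-preserving bijection between absolutely indecomposable objects of $D^{b}(\bCoh(\X_l))$ and those of $D^{b}(mod(C_l))$, and it suffices to count the latter on the sheaf side.

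Next I would identify which objects of $D^b(\bCoh(\X))$ correspond to actual $C$-modules. Since $\bCoh(\X)$ is hereditary, every indecomposable object of $D^{b}(\bCoh(\X))$ is a shift $F[n]$ of an indecomposable sheaf. By Brenner--Butler tilting theory, $\Phi^{-1}(mod(C))$ is the heart obtained by tilting $\bCoh(\X)$ at the torsion pair $(\cT,\cF)$ with $\cT=\{F:\Ext^{1}_{\X}(T,F)=0\}$ and $\cF=\{F:\Hom_{\X}(T,F)=0\}$, so it consists of those $Y$ with $H^{i}(Y)=0$ for $i\notin\{-1,0\}$, $H^{-1}(Y)\in\cF$, $H^{0}(Y)\in\cT$. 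Thus an indecomposable $F[n]$ lies in this heart precisely when $n=0$ and $F\in\cT$, or $n=1$ and $F\in\cF$. Setting $\ga:=\phi^{-1}(\mathbf{d})$, the absolutely indecomposable $C_l$-modules of class $\mathbf{d}$ are therefore in bijection with the disjoint union of $(a)$ the absolutely indecomposable sheaves $F$ on $\X_l$ with $[F]=\ga$ lying in $\cT$, and $(b)$ those with $[F]=-\ga$ lying in $\cF$.

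It remains to show that, for a fixed class $\delta\in K_0(\bCoh(\X))$, the number of absolutely indecomposable sheaves on $\X_l$ of class $\delta$ lying in $\cT$ (resp.\ in $\cF$) is a polynomial in $|l|$; applying this to $\delta=\ga$ and $\delta=-\ga$ and summing then produces the desired polynomial $A_{\mathbf{d},C}$, uniqueness being automatic. The available input is that the \emph{total} number of absolutely indecomposable sheaves of class $\delta$ on $\X_l$ is a polynomial in $|l|$ by \cite[Section~7]{SHiggs}; one must control the difference. Here one uses the structure of torsion pairs attached to tilting sheaves on weighted projective lines: since $T$ has positive rank, $\cF$ contains no torsion sheaf, and using $\Hom_{\X}(\cT,\cF)=0$, Serre duality, and the classification of tilting sheaves (Geigle--Lenzing \cite{GL}, Lenzing--de~la~Pe\~{n}a) one checks that $(\cT,\cF)$ splits on indecomposable sheaves and that $\cF$ meets each $K_0$-class in only finitely many indecomposables, all of them rigid (corresponding to the directing part of $mod(C)$); in the almost-concealed case the finitely many torsion summands of $T$ only modify $\cF$ near the finitely many weighted points. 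Consequently the $\cF$-count of a class is bounded and polynomial (indeed constant for all but possibly the split issue), and the $\cT$-count equals the Kac polynomial of $\bCoh(\X)$ minus this polynomial $\cF$-count. This step is the main obstacle: the mere existence of the derived equivalence is not enough, and one genuinely needs the finer structure theory to see that restricting to $\cT$ (resp.\ $\cF$) costs only a polynomial correction.

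Finally, for the dependence on the combinatorial type: the discrete data of $C$ are the weight sequence of $\X$ together with the classes in $K_0(\bCoh(\X))$ of the indecomposable summands of $T$, and two algebras of the same combinatorial type differ only in the positions of the weighted points. By \cite[Section~7]{SHiggs} the Kac polynomial of $\bCoh(\X)$ in a given class depends only on the weight sequence, and the finitely many rigid correction terms of Step~3 are likewise insensitive to the positions of the weighted points; hence $A_{\mathbf{d},C}$ depends only on the combinatorial type of $C$.
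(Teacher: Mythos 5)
Your Steps 1--2 (the derived equivalence, the tilted heart, and the reduction to counting absolutely indecomposable sheaves of class $\gamma$ lying in $\cT$ and of class $-\gamma$ lying in $\cF$) coincide with the paper's use of Theorem~\ref{T: indecan} and Lemma~\ref{L:intersect}. The genuine gap is your Step 3. Your reduction to the known Kac polynomial of $Coh(\mathbb{X})$ rests on two assertions that are false beyond the domestic case: (a) that the torsion pair $(\cT,\cF)$ ``splits on indecomposable sheaves'', i.e.\ that every indecomposable sheaf lies in $\cT\cup\cF$, and (b) that $\cF$ meets each $K_0$-class in only finitely many, rigid indecomposables, so that the $\cF$-count is an essentially constant correction. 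For tubular or wild weight type there exist indecomposable sheaves whose Harder--Narasimhan slopes straddle those of $T$ (so $\Hom(T,-)$ and $\Ext^1(T,-)$ are both nonzero), hence lie in neither $\cT$ nor $\cF$; and $\cF$ contains all sheaves of sufficiently small slopes, among them non-rigid absolutely indecomposable bundles whose number in a fixed class genuinely grows with $|l|$ (already in tubular type, as for elliptic curves). Consequently ``total Kac count minus a constant $\cF$-correction'' does not compute $A^{\cT}_{\alpha}$, and proving polynomiality of the $\cT$- and $\cF$-counts is exactly the hard point, not a finiteness remark. (Two smaller inaccuracies: ``$\cF$ contains no torsion sheaf'' holds only when $T$ is a tilting \emph{bundle}, not in the almost concealed-canonical case; and your proposal never addresses why the polynomial has coefficients in $\Z$ rather than $\Q$.)

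The paper's route through this obstacle is different in substance: it proves Theorem~\ref{T:2} directly by the Hall-algebra method of \cite{SHiggs}. Proposition~\ref{T:1} shows that the characteristic functions $\mathbf{1}^{\cT}_{\gamma}$, $\mathbf{1}^{\cF}_{\gamma}$ lie in the spherical Hall algebra and admit lifts to Lin's generic integral form ${}_R\mathbf{H}^{sph}_{\up}$ (this needs the slope truncation $Coh^{\geq n}$, Lemmas~\ref{L:last} and \ref{L:finitesum}, and an induction on rank and degree). Then the number of absolutely indecomposable sheaves in $\cT$ (resp.\ $\cF$) of each class is extracted, via an exponential/logarithm identity, from volumes of stacks of pairs (sheaf in $\cT$, nilpotent endomorphism); these volumes are computed by a Jordan-type stratification and expressed as Green pairings of the generic elements above, yielding values of rational functions in $q$ that are independent of the points $\lambda_i$ --- which is also how the dependence only on the combinatorial type is obtained. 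Finally, integrality of the coefficients is deduced from Katz's Lemma~\ref{L:Katz} applied to the constructible set $\mathrm{Aut}^{abs.ind.}_{\bd,C}$. To repair your outline you would have to replace Step 3 by an argument of this kind; the derived equivalence together with the Kac polynomial of $Coh(\mathbb{X})$ alone does not suffice.
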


\vspace{.1in}

\section{Recollections on weighted projective lines}

\vspace{.1in}

In this Section we recall the definitions and main properties of Geigle and Lenzing's weighted projective lines, refering to \cite{GL} for details. In this section, we let $k$ be an arbitrary field.

\vspace{.1in}

\paragraph{\textbf{2.1.}} Consider the projective line $\mathbb{P}^1$ defined over $k$. Let $\lambda_1, \ldots, \lambda_N$ be $N$ distinct $k$-points on $\mathbb{P}^1$ and let $p_1, \ldots, p_N$ be integers satisfying $p_i >1$ for all $i$. Without loss of generality, we may assume that $\lambda_1=\infty$ and $\lambda_2=0$. The simplest way to define the category $Coh(\mathbb{X})=Coh(\mathbb{X}_{\underline{p},\underline{\lambda}})$ is by an analogue of Serre's construction
of the category of coherent sheaves on a projective variety by means of the ring of global sections of (powers of) an ample line bundle. Namely, we set
$$S(\underline{p}, \underline{\lambda})=k[X_1, \ldots, X_n]/ I$$
where $I$ is the ideal generated by the elements $X_i^{p_i}-X_2^{p_2}-\lambda_i X_1^{p_1}$ for $i =3, \ldots, N$.
We also consider the abelian group
$$L_{\underline{p}}:= \left(\bigoplus_{i=1}^N \Z \vec{x}_i\right) / J$$
where $J$ is the subgroup generated by the elements $p_i \vec{x}_i= p_j \vec{x_j}$ for all $i, j$. Giving $X_i$ the degree $\vec{x}_i$ yields an $L_{\underline{p}}$-grading on $S_{\underline{p}, \underline{\lambda}}$.
By definition,
$$Coh(\mathbb{X}):= Mod_{gr} S(\underline{p},\underline{\lambda}) / Mod_{gr,0} S(\underline{p},\underline{\lambda})$$
where $Mod_{gr}$ denotes the category of finitely generated $L_{\underline{p}}$-graded modules and $Mod_{gr,0}$ is the Serre subcategory whose objects are of finite length. We refer to \cite{GL} for a more local construction of
the category $Coh(\mathbb{X})$ as a category of modules over a sheaf of hereditary orders over $\mathbb{P}^1$.
The category $Coh(\mathbb{X})$ is abelian and of global dimension one.
Observe that the map $X \mapsto X_1^{p_1}, Y \mapsto X_2^{p_2}$ induces a graded algebra morphism
$k[X,Y] \to S(\up,\ul)$ whose cokernel is of finite dimension. It induces an exact functor $Coh(\mathbb{X}) \to Coh(\mathbb{P}^1)$ which, however, is not faithful.

\vspace{.1in}

\paragraph{\textbf{2.2.}} Let us denote by $Coh^{l.f.}(\mathbb{X})$ the subcategory of $Coh(\mathbb{X})$ defined as the essential image of the subcategory $ Mod^{t.f.}_{gr} S(\underline{p},\underline{\lambda})$ of torsion-free modules under the functor $Mod_{gr}S(\up, \underline{\lambda}) \to Mod_{gr} S(\underline{p},\underline{\lambda}) / Mod_{gr,0} S(\underline{p},\ul)$. We define $Coh_0(\mathbb{X})$ in a similar fashion as the essential image of the subcategory $Mod^{t}S(\underline{p},\underline{\lambda})$ of torsion modules. Objects of $Coh^{l.f.}(\mathbb{X})$ and $Coh_0(\mathbb{X})$ are respectively called vector bundles and torsion sheaves over $\mathbb{X}$. Any object $\mathcal{G} \in Coh(\mathbb{X})$ has a unique maximal subobject
$\mathcal{G}^{tor}$ and the quotient $\mathcal{G}^{l.f.}:= \mathcal{G}/\mathcal{G}^{tor}$ is a vector bundle. In addition there is a (non-canonical) splitting $\mathcal{G} \simeq \mathcal{G}^{l.f.} \oplus \mathcal{G}^{tor}$.  For any pair $\mathcal{G} \in Coh^{(l.f.}(\mathbb{X}),\mathcal{H}\in Coh^{tor}(\mathbb{X})$ we have
$$Hom(\mathcal{H},\mathcal{G})=\{0\}, \qquad Ext^1(\mathcal{G},\mathcal{H})=\{0\}.$$
The category $Coh^{l.f.}(\mathbb{X})$ is equivalent to a category of vector bundles on $\mathbb{P}^1$ equipped with a
suitable parabolic structure at the points $\{\lambda_1, \ldots, \lambda_N\}$, see \cite{Lenzing} for details.

\vspace{.1in}

Let us now briefly describe some important classes of objects in $Coh(\mathbb{X})$. If $M$ is a graded $S(\up, \ul)$-module and $\vec{u} \in L_{\up}$ then we define the shifted module $M[\vec{u}]$ by $M[\vec{u}]_{\vec{v}}=M_{\vec{u}+\vec{v}}$. The grading shift $M \mapsto M[\vec{u}]$ induces an auto-equivalence
of the category $Coh(\mathbb{X})$, also denoted $\mathcal{G} \mapsto \mathcal{G}(\vec{u})$.
The modules $S(\up,\ul)[\vec{u}]$ give rise to objects $\mathcal{O}(\vec{u})$ in $Coh(\mathbb{X})$; these are the line bundles on $\mathbb{X}$, and form the
Picard group $Pic(\mathbb{X})$. We have
$$Hom(\mathcal{O}(\vec{u}), \mathcal{O}(\vec{v}))= S(\up,\ul)_{\vec{v}-\vec{u}}.$$
It is known that any vector bundle on $\mathbb{X}$ admits a filtration whose successive factors are line bundles.
For any $i$ we denote by $S_i^{(0)}$ the torsion sheaf associated to the module $S(\up,\ul) / X_i S(\up,\ul)$, and
put $S_i^{(l)}=S_i^{(0)}(l\vec{x}_i)$. The sheaf $S_i^{(l)}$ only depends up to isomorphism on the class of $l$ in $\Z/ p_i\Z$. Multiplication by $X_i$ yields a short exact sequence
$$\xymatrix{0 \ar[r] & \mathcal{O}(l\vec{x}_i) \ar[r] & \mathcal{O}((l+1)\vec{x}_i) \ar[r] & S_i^{(l)} \ar[r] & 0}.$$
The sheaves $S_i^{(0)}, S_i^{(1)}, \ldots, S_i^{(p_i-1)}$ generate a Serre subcategory $Coh_{0,\lambda_i}(\mathbb{X})$ of $Coh_0(\mathbb{X})$, consisting of sheaves supported at the point $\lambda_i$. The category 
$Coh_{0,\lambda_i}(\mathbb{X})$ is equivalent to the category of nilpotent representations of the cyclic quiver of  type $A_{p_i-1}^{(1)}$ over the field $k$, see e.g. \cite[Lemma~2.4]{SDuke}

\vspace{.1in}

\paragraph{\textbf{2.3.}} Let $\vec{w}=
(N-2) \vec{c}-\sum_i \vec{x}_i \in L_{\up}$, where we have set $\vec{c}=p_i \vec{x}_i$ for any $i$. Then the
functor $\mathcal{G} \mapsto \mathcal{G}(\vec{w})$ is a Serre functor for the category $Coh(\mathbb{X})$, i.e. there are functorial isomorphisms
$$Ext^1(\mathcal{G},\mathcal{H})^* \simeq Hom(\mathcal{H}, \mathcal{G}(\vec{w})), \qquad \forall\; \mathcal{G},\mathcal{H} \in Coh(\mathbb{X}).$$
The auto-equivalence $\mathcal{G} \mapsto \mathcal{G}(\vec{w})$ preserves $Coh^{l.f.}(\mathbb{X})$ and $Coh^{tor}(\mathbb{X})$.

\vspace{.1in}

\paragraph{\textbf{2.4.}} Let $K_0$ stand for the Grothendieck group of the category $Coh(\mathbb{X})$, and let $K_0^+ \subset K_0$ be the cone of classes of objects in $Coh(\mathbb{X})$, and $K_0^{+,tor}$ the cone of torsion classes. We denote by $\overline{\mathcal{G}}$ the class of the sheaf $\mathcal{G}$. The group $K_0$ is equipped with the Euler form
\begin{align*}
\langle\;,\;\rangle~:K_0 \otimes_\Z K_0 & \to \Z\\
\overline{\mathcal{G}} \otimes \overline{\mathcal{H}} &\mapsto \langle \overline{\mathcal{G}}, \overline{\mathcal{H}} \rangle = dim(Hom(\mathcal{G},\mathcal{H}))- dim (Ext^1(\mathcal{G}, \mathcal{H}) )
\end{align*}

The precise description of $K_0$ and the Euler form is given below for the comfort of the reader (see e.g. \cite[Section~5]{SDuke} for details). Consider the $\Z$-module 
$$N_{\up}:= \left(\Z e  \oplus \bigoplus_{i=1}^N \bigoplus_{s\in \Z/p_i\Z} \Z e_{i,s}\right) / J$$
where $J$ is the subgroup generated by the elements $\sum_{s \in \Z/p_i\Z}e_{i,s} =\sum_{s \in \Z/p_j\Z}e_{j,s} $ for $i \neq j$.
For simplicity, we set $\delta =\sum_{s \in \Z/p_i\Z}e_{i,s}$ for any $i$. We equip $N_{\up}$ with a bilinear form defined as follows~:
 $$
\langle e, e \rangle =1, \qquad \langle e, \delta \rangle =1,
\qquad \langle \delta, e \rangle =-1,$$
$$\langle \delta,\delta \rangle =0, \qquad \langle \delta, e_{i,s} 
\rangle =0, \qquad \langle e_{i,s},\delta \rangle =0,
$$
and
$$\langle e, e_{i,s} \rangle =\begin{cases} 1 & {if}\; s=-1\\
0 & {if}\; s \neq -1 \end{cases}$$
$$\langle e_{i,s}, e \rangle =\begin{cases} -1 & {if}\; s=0\\
0 & {if}\; s \neq 0 \end{cases}$$
$$\langle e_{i,s}, e_{i',s'} \rangle =\begin{cases} 1 & {if}
\; i=i',\;s=s'\\
-1 & {if}\; i=i',\;s= s'+1\\
0 & {otherwise} \end{cases}.$$

\begin{lem}\label{L:kzero} The assignement $\overline{\O} \mapsto e, \overline{S_i^{(s)}} \mapsto e_{i,s}$ extends to an isomorphism
$(K_0, \langle,\;\,\rangle)=\simeq (N_{\up}, \langle, \;\, \rangle)$ of $\Z$-modules equipped with bilinear forms .
\end{lem}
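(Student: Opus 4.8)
The strategy is to show first that the assignment $\overline{\O}\mapsto e$, $\overline{S_i^{(s)}}\mapsto e_{i,s}$ extends to an isomorphism of $\Z$-modules $\phi\colon N_{\up}\to K_0$, and then to verify the compatibility with the bilinear forms, which reduces to a finite list of Euler-form computations in $Coh(\mathbb X)$. For the well-definedness of $\phi$, observe that the only relations defining $N_{\up}$ are the equalities $\sum_{s\in\Z/p_i\Z}e_{i,s}=\sum_{s\in\Z/p_j\Z}e_{j,s}$, so it is enough to check that $\sum_{s\in\Z/p_i\Z}\overline{S_i^{(s)}}$ does not depend on $i$ in $K_0$. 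Adding up the short exact sequences $0\to\O(l\vec x_i)\to\O((l+1)\vec x_i)\to S_i^{(l)}\to 0$ for $l=0,\ldots,p_i-1$ and using $p_i\vec x_i=\vec c$ gives $\sum_{s\in\Z/p_i\Z}\overline{S_i^{(s)}}=\overline{\O(\vec c)}-\overline{\O}$, whose right-hand side visibly does not involve $i$.

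For surjectivity, use the splitting $\overline{\mathcal G}=\overline{\mathcal G^{l.f.}}+\overline{\mathcal G^{tor}}$ provided by Section~2.2. Every vector bundle is filtered by line bundles, and the class of an arbitrary line bundle $\O(\vec u)$ lies in $\Z\overline{\O}+\sum_{i,s}\Z\overline{S_i^{(s)}}$: the element $\vec u$ can be joined to $0$ in $L_{\up}$ by a chain of moves $\vec v\mapsto\vec v\pm\vec x_i$, and each such move changes the class of the corresponding line bundle by $\pm\overline{S_i^{(s)}}$ for a suitable $s$, again through the sequences above. For the torsion part, $Coh_0(\mathbb X)$ decomposes as the direct sum of the subcategories $Coh_{0,x}(\mathbb X)$ over the closed points $x$ of $\mathbb P^1$; the summand at $x=\lambda_i$ contributes classes in $\sum_s\Z\overline{S_i^{(s)}}$ by Section~2.2, while the summand at an ordinary point contributes a multiple of $\overline{\O(\vec c)}-\overline{\O}=\delta$, as one sees from a global section of $\O(\vec c)$ vanishing along that point. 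Hence $\phi$ is onto.

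To deduce that $\phi$ is an isomorphism of $\Z$-modules, note that $N_{\up}$ is free abelian of rank $2+\sum_i(p_i-1)$ — using the $N-1$ defining relations to eliminate the generators $e_{i,p_i-1}$ for $i=2,\ldots,N$ exhibits an explicit basis — and that $K_0$ is likewise free abelian of rank $2+\sum_i(p_i-1)$ (see \cite{GL}, or \cite[Section~5]{SDuke}). A surjective homomorphism between free abelian groups of the same finite rank is an isomorphism, so $\phi$ is bijective. Alternatively one may write the inverse down directly, sending $\overline{\mathcal G}$ to $\mathrm{rk}(\mathcal G)\,e$ plus the dimension vectors, inside the tubes $Coh_{0,\lambda_i}(\mathbb X)$, of the torsion parts of the localizations of $\mathcal G$ at the points $\lambda_i$.

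Finally, since $\phi$ is additive and $\langle\,,\,\rangle$ is bilinear on both sides, it remains only to verify $\langle\phi(a),\phi(b)\rangle=\langle a,b\rangle$ as $a,b$ run through the generators $e$ and $e_{i,s}$, i.e. to compute the Euler form of $Coh(\mathbb X)$ on $\O$ and the $S_i^{(s)}$. One gets $\langle\overline{\O},\overline{\O}\rangle=1$ from $\Hom(\O,\O)=S(\up,\ul)_0=k$ and $\Ext^1(\O,\O)\cong\Hom(\O,\O(\vec w))^*=0$, the latter because $\vec w$ is not effective, so $\Hom(\O,\O(\vec w))=S(\up,\ul)_{\vec w}=0$. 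The remaining values follow by combining: the vanishings $\Hom(\mathcal H,\O)=0$ and $\Ext^1(\O,\mathcal H)=0$ for $\mathcal H$ torsion (Section~2.2); Serre duality (Section~2.3) together with the identity $S_i^{(s)}(\vec w)\cong S_i^{(s-1)}$; the short exact sequences relating the $S_i^{(s)}$ to line bundles, which reduce the mixed terms $\langle\overline{\O},\overline{S_i^{(s)}}\rangle$ and $\langle\overline{S_i^{(s)}},\overline{\O}\rangle$ to dimensions of small graded pieces of $S(\up,\ul)$; and, for the terms $\langle\overline{S_i^{(s)}},\overline{S_{i'}^{(s')}}\rangle$, the vanishing of $\Hom$ and $\Ext^1$ between sheaves supported at distinct points when $i\neq i'$, and the equivalence of Section~2.2 with nilpotent representations of the cyclic quiver of type $A^{(1)}_{p_i-1}$ — whose Euler form on the simple modules is exactly the stated one — when $i=i'$. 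Comparing with the formulas defining $\langle\,,\,\rangle$ on $N_{\up}$ finishes the proof. The only step requiring genuine care is this last computation, in particular pinning down the precise dimensions and signs of $\Hom(\O,S_i^{(s)})$ and of the groups $\Hom$ and $\Ext^1$ internal to a single tube; everything else is either formal or a citation.
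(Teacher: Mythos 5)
The paper does not actually prove Lemma~\ref{L:kzero}: it is stated as a recollection, and the reader is referred to \cite[Section~5]{SDuke} (ultimately \cite{GL}) for the description of $K_0$ and of the Euler form. Your direct verification is therefore a genuinely different, more self-contained route, and its main line is correct: well-definedness from the telescoping identity $\sum_{s}\overline{S_i^{(s)}}=\overline{\mathcal{O}(\vec c)}-\overline{\mathcal{O}}$; surjectivity from the splitting of a sheaf into locally free and torsion parts, line-bundle filtrations, the moves $\vec v\mapsto \vec v\pm\vec x_i$, and the pointwise decomposition of $Coh_0(\mathbb{X})$; injectivity by comparing ranks of free abelian groups; and the form comparison reduced by bilinearity to the generators, where the ingredients you list ($\vec w$ not effective, $S_i^{(s)}(\vec w)\cong S_i^{(s-1)}$, the Hom/Ext vanishing between torsion and locally free sheaves, Serre duality, and the tube computations via the cyclic-quiver equivalence) do yield exactly the stated table. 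What this buys is an argument readable without opening the references; what it costs is that the injectivity step still imports the freeness and rank $2+\sum_i(p_i-1)$ of $K_0$ from \cite{GL} or \cite{SDuke}, so the proof is not entirely independent of the sources the paper cites.

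Two points should be tightened. First, the parenthetical ``explicit inverse'' is not well defined as stated: the assignment $\overline{\mathcal G}\mapsto \mathrm{rank}(\mathcal G)\,e$ plus the classes of the torsion parts of the localizations at the $\lambda_i$ sends $\overline{\mathcal{O}(\vec x_1)}$ and $\overline{\mathcal{O}}$ to the same element, whereas $\overline{\mathcal{O}(\vec x_1)}-\overline{\mathcal{O}}=\overline{S_1^{(0)}}\neq 0$; since the rank-count argument already gives injectivity, simply delete this aside (or replace it by honest coordinates, e.g.\ rank, degree and the linear forms $\langle -,\overline{S_i^{(s)}}\rangle$). Second, for a closed point of degree $d$ away from the $\lambda_i$ one needs a section of $\mathcal{O}(d\vec c)$ (the pullback of the irreducible degree-$d$ form on $\mathbb{P}^1$), not of $\mathcal{O}(\vec c)$; the class of the corresponding simple torsion sheaf is then $d\delta$, which is all that surjectivity requires.
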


%
%

Finally, we define two important functions on the Grothendieck group $K_0$, the \textit{rank} and the \textit{degree}, as follows~:
$$rank(\overline{\O})=1, \qquad rank(\overline{S}_i^{(s)})=0\quad \;\forall \;i,s,$$
$$deg(\overline{\O})=0, \qquad deg(\overline{S}_i^{(s)})=\frac{p}{p_i}\quad\;\forall \;i,s,$$
where $p=l.c.m.(p_1, \ldots, p_N)$. Observe that 
$$rank(\mathcal{G})=rank(\mathcal{G}(\vec{u})), \qquad deg(\mathcal{G}(\vec{u}))=deg(\mathcal{G}) + rank(\mathcal{G})\sum_i u_i \frac{p}{p_i}$$
for any $\mathcal{G}$ and any $\vec{u}=\sum_i u_i \vec{x}_i$. We set 
$$\kappa= deg (\mathcal{O}(\vec{w}))=p(N-2) -\sum_i \frac{p}{p_i}.$$
The number $\kappa$ plays the role, in the present context, of the degree of the canonical bundle. In particular, the \textit{genus} of $\mathbb{X}$ is defined as $g_{\mathbb{X}}=1 + \kappa/2$.

\vspace{.1in}

\paragraph{\textbf{2.5.}} The \textit{slope} of a sheaf $\mathcal{G}$ is defined to be
$$\mu(\mathcal{G})=\frac{deg(\mathcal{G})}{rank(\mathcal{G})} \in \mathbb{Q} \cup \{\infty\}.$$
We say that a sheaf $\mathcal{G} \in Coh(\mathbb{X})$ is \textit{semistable of slope $\nu$} if $\mu(\mathcal{G})=\nu$ and if for any subsheaf $\mathcal{H} \subset \mathcal{G}$ we have $\mu(\mathcal{H}) \leq \nu$. We recall in the following lemma the main properties of semistable sheaves (see e.g. \cite{GL} for details).

\vspace{.1in}

\begin{lem} The following hold~:
\begin{enumerate}
\item[i)] If $\mathcal{G},\mathcal{H}$ are semistable sheaves of slopes $\nu, \sigma$ respectively then
$$\nu > \sigma \Rightarrow Hom(\mathcal{G},\mathcal{H})=\{0\}, \qquad \sigma > \mu + \kappa \Rightarrow Ext^1(\mathcal{G}, \mathcal{H})=\{0\}.$$
\item[ii] Any coherent sheaf $\mathcal{G}$ admits a unique filtration (called the \textit{Harder-Narasimhan filtration})
$$\mathcal{G}=\mathcal{G}_0 \supset \mathcal{G}_1  \supset \cdots \supset \mathcal{G}_{s-1} \supset \mathcal{G}_{s}=0$$
whose factors $\mathcal{H}_i := \mathcal{G}_{i-1}/\mathcal{G}_i$ are semistable and satisfy
$$\mu(\mathcal{H}_1) < \mu(\mathcal{H}_2) < \cdots < \mu(\mathcal{H}_s).$$
\end{enumerate}
\end{lem}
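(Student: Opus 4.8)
The plan is to follow the classical Harder–Narasimhan theory for coherent sheaves on a smooth projective curve almost verbatim, the one extra feature being the torsion sheaves, which are all semistable of slope $\infty$. The inputs that do the work are already recorded above: the vanishings $\Hom(\mathcal{H},\mathcal{G})=0=\Ext^1(\mathcal{G},\mathcal{H})$ for $\mathcal{H}$ torsion and $\mathcal{G}$ locally free, together with the splitting $\mathcal{G}\simeq\mathcal{G}^{l.f.}\oplus\mathcal{G}^{tor}$ (\S 2.2); Serre duality $\Ext^1(\mathcal{G},\mathcal{H})^{*}\simeq\Hom(\mathcal{H},\mathcal{G}(\vec{w}))$, with $\mathcal{G}\mapsto\mathcal{G}(\vec{w})$ preserving $Coh^{l.f.}$ and $Coh^{tor}$ (\S 2.3); and the additivity of $rank$ and $deg$, with $deg(\mathcal{G}(\vec{w}))=deg(\mathcal{G})+rank(\mathcal{G})\,\kappa$ (\S 2.4). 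I would also use repeatedly the elementary see-saw remark: in a short exact sequence $0\to\mathcal{A}\to\mathcal{B}\to\mathcal{C}\to 0$ the slope of $\mathcal{B}$ lies (weakly) between those of $\mathcal{A}$ and $\mathcal{C}$, with the obvious conventions when torsion is present.

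For (i): a nonzero torsion sheaf is semistable of slope $\infty$, and a semistable sheaf of finite slope has no nonzero torsion subsheaf, hence is a vector bundle. So if $\nu>\sigma$ with $\nu=\infty$, then $\mathcal{G}$ is torsion and $\mathcal{H}$ a vector bundle, so $\Hom(\mathcal{G},\mathcal{H})=0$ by \S 2.2; the strict inequality $\nu>\sigma$ excludes the remaining cases with $\nu=\infty$. When $\nu,\sigma<\infty$, both sheaves are vector bundles; for $0\ne f\colon\mathcal{G}\to\mathcal{H}$, set $\mathcal{I}=\Im(f)$, so that $0\to\Ker(f)\to\mathcal{G}\to\mathcal{I}\to 0$ and $\mathcal{I}\hookrightarrow\mathcal{H}$; semistability of $\mathcal{G}$ applied to $\Ker(f)$ gives $\mu(\mathcal{I})\geq\nu$, semistability of $\mathcal{H}$ applied to $\mathcal{I}$ gives $\mu(\mathcal{I})\leq\sigma$, a contradiction. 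For the second implication, since $\mathcal{G}\mapsto\mathcal{G}(\vec{w})$ carries subsheaves to subsheaves and shifts the degree by $\kappa$ times the rank, $\mathcal{G}(\vec{w})$ is again semistable, of slope $\nu+\kappa$ (or torsion, if $\mathcal{G}$ is); so $\sigma>\nu+\kappa$ forces $\Hom(\mathcal{H},\mathcal{G}(\vec{w}))=0$ by the first implication (or directly by \S 2.2 when $\mathcal{H}$ is torsion), i.e. $\Ext^1(\mathcal{G},\mathcal{H})=0$ by Serre duality.

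For (ii): existence. Since $\mathcal{G}^{tor}$ is semistable of the maximal slope $\infty$ and $\mathcal{G}\simeq\mathcal{G}^{l.f.}\oplus\mathcal{G}^{tor}$, it suffices to build the filtration for a vector bundle and then adjoin $\mathcal{G}^{tor}$ as the bottom term. For a vector bundle $\mathcal{G}$, choose among its subsheaves one, $\mathcal{G}_{\max}$, of maximal slope and of maximal rank with that slope; the maximality of $\mathcal{G}_{\max}$, together with the see-saw remark, forces $\mathcal{G}_{\max}$ to be semistable and $\mathcal{G}/\mathcal{G}_{\max}$ to be a vector bundle all of whose subsheaves have slope $<\mu(\mathcal{G}_{\max})$; declare $\mathcal{G}_{\max}$ the last nonzero term and iterate on $\mathcal{G}/\mathcal{G}_{\max}$, the process stopping because the rank strictly decreases. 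Uniqueness: the bottom term of any such filtration is intrinsically characterized — it is semistable, of slope $\geq$ that of every subsheaf of $\mathcal{G}$, and of maximal rank among such — because by (i) every semistable subsheaf of $\mathcal{G}$ of slope $\geq\mu(\mathcal{G}_{\max})$ factors through $\mathcal{G}_{\max}$ (a nonzero induced map to a semistable quotient of strictly smaller slope would contradict (i)); hence the bottom terms of two such filtrations agree, and one concludes by induction on the length after passing to $\mathcal{G}/\mathcal{G}_{\max}$.

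The only step that is not purely formal is the existence of the maximal destabilizing subsheaf $\mathcal{G}_{\max}$ — equivalently, the boundedness from above of the slopes of subsheaves of a fixed vector bundle on $\mathbb{X}$ — which is the analogue for $\mathbb{X}$ of Grothendieck's boundedness lemma on $\mathbb{P}^1$; this is where I expect the (only) genuine work to lie. It can be obtained by transporting the $\mathbb{P}^1$ statement along the exact functor $Coh(\mathbb{X})\to Coh(\mathbb{P}^1)$ of \S 2.1 (which alters $rank$ and $deg$ in a way controlled by the weights $p_i$), or from the realization of $Coh^{l.f.}(\mathbb{X})$ as parabolic bundles on $\mathbb{P}^1$ recalled in \S 2.2. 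Since the statement is classical, the most economical option is simply to quote it, together with the whole lemma, from \cite{GL}.
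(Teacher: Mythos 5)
Your proposal is sound, but note that the paper does not prove this lemma at all: it is stated as a recollection of standard facts about semistable sheaves on a weighted projective line, with a pointer to \cite{GL} -- which is precisely the ``most economical option'' you offer at the end. Your reconstruction of the classical Harder--Narasimhan argument is correct in this setting: treating torsion sheaves as semistable of slope $\infty$ and using the $\Hom$/$\Ext^1$ vanishings and the splitting of Section 2.2 handles the infinite-slope cases; the finite-slope case of (i) is the usual image/see-saw argument; and the $\Ext^1$-vanishing follows from Serre duality because twisting by $\vec{w}$ preserves subobjects, hence semistability, and shifts the slope by $\kappa$ (the ``$\mu$'' in the displayed implication is indeed a typo for $\nu$). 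For (ii), you correctly isolate the one non-formal input, namely the boundedness from above of the slopes of subsheaves of a fixed sheaf (plus discreteness of the possible slopes at bounded rank, which holds since degrees lie in $\frac{1}{p}\Z$ up to normalization), and your uniqueness argument -- a semistable subsheaf of slope at least $\mu_{max}$ maps to zero into every HN factor of the quotient by (i), hence lands in the maximal destabilizing subsheaf -- is the standard and complete one. Quoting the boundedness (or the whole lemma) from \cite{GL}, or transporting it from $\mathbb{P}^1$ as you suggest, is consistent with what the paper does.
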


In the notations of the lemma above, we set
$$\mu_{min}(\mathcal{G})=\mu(\mathcal{H}_1), \qquad \mu_{max}(\mathcal{G})=\mu(\mathcal{H}_s).$$ 
 Thus a coherent sheaf $\mathcal{G}$ is a torsion sheaf if and only if it is semistable of slope $\infty$, while it is a vector bundle if and only if $\mu_{max}(\mathcal{G}) <\infty$.

\vspace{.2in}

\section{Tilting sheaves and (almost)(concealed-)canonical algebras}

\vspace{.1in}

\paragraph{\textbf{3.1.}} 
We follow the lines of \cite{Meltzer} for this section. Let $k$ again be any field, and let $\mathbb{X}$ be the weighted projective line defined over $k$, attached to the data $(p_1, \ldots, p_N, \lambda_1, \ldots, \lambda_N)$
A \emph{tilting sheaf} in $Coh(\mathbb{X})$ is a coherent sheaf $T$ such that $Ext^1(T,T) = 0$ and $T$ generates the bounded derived category $\mathcal{D}^b(Coh(\mathbb{X}))$ as a triangulated category.  A \emph{tilting bundle} is a tilting sheaf $T$ which is also a vector bundle.  Note that, since $Coh(\mathbb{X})$ is of global dimension one, all higher extension spaces of $T$ vanish as well.

Given a tilting sheaf $T$, we get a derived equivalence $RHom(T,?):\mathcal{D}^b(Coh(\mathbb{X}))\stackrel{\sim}{\to} \mathcal{D}^b(mod(C_T))$, where $C_T=End(T)$. The algebra $C_T$ is called \emph{almost concealed-canonical}.  If, moreover, $T$ is a tilting bundle, then the algebra $C_T$ is called \emph{concealed canonical}. In any case, $C_T$ is a finite-dimensional algebra of global dimension two. 

Any tilting sheaf $T$ defines a torsion pair $(\mathcal{T}, \mathcal{F})$ in $Coh(\mathbb{X})$, where
$$ \mathcal{T} = \{X\in Coh(\mathbb{X}) \ | \ Ext^1_\mathbb{X}(T,X) = 0\}, \qquad  \mathcal{F} = \{X\in Coh(\mathbb{X}) \ | \ Hom_\mathbb{X}(T,X) = 0\}. $$
The equivalence $\mathcal{D}^b(Coh(\mathbb{X}))\stackrel{\sim}{\to} \mathcal{D}^b(mod(C_T))$ allows us to view the module category of $C_T$ as a subcategory of $\mathcal{D}^b(Coh(\mathbb{X}))$
The next result is proved in \cite[Sect. 8.1]{Meltzer}~:

\vspace{.05in}

\begin{theo}[Meltzer]\label{T: indecan}The following is true~:
\begin{enumerate}
\item[0)] the subcategories $\mathcal{T}$ and $\mathcal{F}[1]$ are extension-closed in $mod(C_T)$,
\item[i)] any indecomposable $C_T$-module belongs to either $\mathcal{T}$ or $\mathcal{F}[1]$ and conversely, any indecomposable object of $\mathcal{T}$ or $\mathcal{F}[1]$ belongs to $mod(C_T)$,
\item[ii)] Any $M \in \mathcal{F}[1]$ is of injective dimension at most one; any $N \in \mathcal{T}$ is of projective dimension at most one,
\item[iii)] for any pair $M \in \mathcal{F}[1]$, $N \in \mathcal{T}$ we have
$$Ext^2(N,M)=Ext^1(N,M)=Hom(M,N)=\{0\},$$
In particular, any $C_T$ module $M$ has a canonical submodule $M^{(1)} \subset M$ such that $M^{(1)} \in \mathcal{F}[1]$, $M^{(2)}:= M/M^{(1)} \in \mathcal{T}$ and $M \simeq M^{(1)} \oplus M^{(2)}$.
\end{enumerate}
\end{theo}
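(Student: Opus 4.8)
This is the specialization to the hereditary category $Coh(\mathbb{X})$ of the standard tilting formalism of Happel--Reiten--Smal{\o} and Brenner--Butler; the whole subtlety is that $Coh(\mathbb{X})$ has global dimension one while $C_T$ has global dimension two, and in the end almost everything flows from the vanishing $\Ext^{\geq 2}_{Coh(\mathbb{X})}=0$. Abbreviate $\mathcal{D}^b=\mathcal{D}^b(Coh(\mathbb{X}))$. The first step is to record the description of $mod(C_T)$, viewed via $RHom(T,?)$ inside $\mathcal{D}^b$, as the tilted heart
$$\mathcal{H}=\{\,Y\in\mathcal{D}^b\ :\ H^0(Y)\in\cT,\ H^{-1}(Y)\in\cF,\ H^i(Y)=0\ \text{for}\ i\neq 0,-1\,\}.$$
Indeed, $RHom(T,X)=\Hom(T,X)$ lies in degree $0$ for $X\in\cT$, while $RHom(T,X)=\Ext^1(T,X)[-1]$ lies in degree $1$ for $X\in\cF$, so $\cT$ and $\cF[1]$ both map into the standard heart $mod(C_T)$; that $\mathcal{H}=mod(C_T)$ is the content of the Brenner--Butler theorem, and this is exactly where the tilting hypotheses $\Ext^1(T,T)=0$ and ``$T$ generates $\mathcal{D}^b$'' are used. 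Every $Y\in\mathcal{H}$ then sits in its cohomological truncation triangle
$$H^{-1}(Y)[1]\to Y\to H^0(Y)\to H^{-1}(Y)[2],$$
which is a short exact sequence inside $mod(C_T)=\mathcal{H}$ and exhibits $(\cF[1],\cT)$ as a torsion pair in $mod(C_T)$, with torsion class $\cF[1]$ and torsion-free class $\cT$.

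Granting this, part (0) is immediate, since torsion and torsion-free classes are always extension-closed (alternatively one checks it directly from the long exact cohomology sequence of a short exact sequence in $\mathcal{H}$), and the ``conversely'' half of (i) is just the inclusions $\cT,\cF[1]\subset\mathcal{H}$ noted above. For the first half of (i): the connecting morphism of the truncation triangle of $Y$ lies in $\Hom_{\mathcal{D}^b}(H^0(Y),H^{-1}(Y)[2])=\Ext^2_{Coh(\mathbb{X})}(H^0(Y),H^{-1}(Y))=0$ by hereditarity of $Coh(\mathbb{X})$, so the triangle splits, $Y\simeq H^{-1}(Y)[1]\oplus H^0(Y)$, and if $Y$ is indecomposable one of the two summands is $0$. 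Reading the same splitting inside $\mathcal{H}$ yields the ``in particular'' clause of (iii): for an arbitrary module $M$ one sets $M^{(1)}=H^{-1}(\cdot)[1]\in\cF[1]$ (the torsion submodule, hence canonical) and $M^{(2)}=H^0(\cdot)\in\cT$, so that $M\simeq M^{(1)}\oplus M^{(2)}$.

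Parts (ii) and (iii) I would obtain by transporting $\Ext$-groups in $mod(C_T)$ to $\Hom$-groups in $\mathcal{D}^b$ and feeding the truncation triangle, together with $\Ext^{\geq 2}_{Coh(\mathbb{X})}=0$, into it. Write $\tilde M=F[1]$ with $F\in\cF$ for $M\in\cF[1]$, write $\tilde N=N$ for $N\in\cT$, and for a general module $X$ put $F_X=H^{-1}(\tilde X)\in\cF$ and $T_X=H^0(\tilde X)\in\cT$. Applying $\Hom_{\mathcal{D}^b}(-,F[3])$ to $F_X[1]\to\tilde X\to T_X\to F_X[2]$ gives
$$\Ext^2_{C_T}(X,M)=\Hom_{\mathcal{D}^b}(\tilde X,F[3])=0$$
because $\Ext^3_{Coh(\mathbb{X})}(T_X,F)=0$ and $\Ext^2_{Coh(\mathbb{X})}(F_X,F)=0$; since $C_T$ has global dimension two this means the injective dimension of $M$ is at most one. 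Dually, applying $\Hom_{\mathcal{D}^b}(N,-)$ to the $[2]$-shift of the same triangle gives $\Ext^2_{C_T}(N,X)=\Hom_{\mathcal{D}^b}(N,\tilde X[2])=0$ from $\Ext^3_{Coh(\mathbb{X})}(N,F_X)=\Ext^2_{Coh(\mathbb{X})}(N,T_X)=0$, so the projective dimension of $N$ is at most one. The three vanishings in (iii) are then one-liners of the same kind: $\Ext^2_{C_T}(N,M)=\Hom_{\mathcal{D}^b}(N,F[3])=\Ext^3_{Coh(\mathbb{X})}(N,F)=0$; $\Ext^1_{C_T}(N,M)=\Hom_{\mathcal{D}^b}(N,F[2])=\Ext^2_{Coh(\mathbb{X})}(N,F)=0$; and $\Hom_{C_T}(M,N)=\Hom_{\mathcal{D}^b}(F[1],N)=\Ext^{-1}_{Coh(\mathbb{X})}(F,N)=0$.

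The one genuinely non-formal ingredient is the identification $mod(C_T)=\mathcal{H}$, i.e. that $RHom(T,?)$ carries the standard $t$-structure on $\mathcal{D}^b(mod(C_T))$ onto the Happel--Reiten--Smal{\o} tilt of $(\cT,\cF)$ on $\mathcal{D}^b$; this is the step where the tilting hypotheses on $T$ are really consumed, and it is the part I would either establish by hand (checking both inclusions $mod(C_T)\subseteq\mathcal{H}$ and $\mathcal{H}\subseteq mod(C_T)$, using $\Ext^1(T,T)=0$ and that $T$ generates $\mathcal{D}^b$) or simply quote, as in \cite{Meltzer}. Everything downstream is bookkeeping with cohomological truncation triangles and the hereditarity of $Coh(\mathbb{X})$.
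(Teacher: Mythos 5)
Your argument is correct. The paper itself offers no proof of this theorem---it is quoted from \cite[Sect. 8.1]{Meltzer}---and your derivation is precisely the standard tilting-theoretic argument behind that citation: identify $mod(C_T)$, via $RHom(T,?)$, with the Happel--Reiten--Smal{\o} tilt $\mathcal{H}$ of $Coh(\mathbb{X})$ along the torsion pair $(\cT,\cF)$, and then read off (0)--(iii) from the truncation triangles together with $\Ext^{\geq 2}_{Coh(\mathbb{X})}=0$; your $\Hom$-shift computations for (ii) and (iii) and the splitting argument for (i) all check out. You correctly flag that the identification $mod(C_T)=\mathcal{H}$ is the one non-formal ingredient; that is exactly the content the paper is quoting, so your proof is complete modulo the same external reference (or a direct verification of Happel's theorem for a hereditary category with a tilting object).
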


Recall that $K_0$ is the Grothendieck group of the abelian category $Coh(\mathbb{X})$. There is a sequence of isomorphisms $K_0 \stackrel{\sim}{\to} K_0(\mathcal{D}^b(Coh(\mathbb{X}))) \stackrel{\sim}{\to} K_0(\mathcal{D}^b(mod(C_T))) \stackrel{\sim}{\to} K_0(mod(C_T))$. Let us denote by $\psi : K_0 \stackrel{\sim}{\to} K_0(mod(C_T))$ their composition. We will implicitly use the map $\psi$ to identify $K_0(mod(C_T))$ with $K_0$ and hence, thanks to Lemma~\ref{L:kzero}, with $N_{\up}$.
For any additive subcategory $\mathcal{C}$ of $Coh(\mathbb{X})$ or $mod(C_T)$, denote by $K_0^+(\mathcal{C})$ the submonoid of $K_0$ consisting of classes of objects of $\mathcal{C}$. We define the \textit{combinatorial type} of $C_T$ to be equal to the tuple $($\up$, \psi(K_0^+(\mathcal{T})), \psi(K_0^+(\mathcal{F}[1])))$. 

 The following observations will turn out to be useful.

\begin{lem}\label{L:intersect}
 The intersection of $K_0^+(\mathcal{T})$ and $K_0^+(\mathcal{F}[1])$ in $K_0$ is trivial.
\end{lem}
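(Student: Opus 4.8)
The plan is to combine Theorem~\ref{T: indecan}(iii) with a rank/slope argument. Suppose $\mathbf{d} \in K_0^+(\mathcal{T}) \cap K_0^+(\mathcal{F}[1])$, so there exist $N \in \mathcal{T}$ with $\overline{N} = \mathbf{d}$ and $M \in \mathcal{F}[1]$ with $\overline{M} = \mathbf{d}$. The key point is that the Euler form, restricted to either monoid, controls things: by Theorem~\ref{T: indecan}(iii) we have $\Ext^2(N,M) = \Ext^1(N,M) = \Hom(M,N) = 0$, hence $\langle \overline{N}, \overline{M} \rangle = \dim \Hom(N,M) \geq 0$, i.e. $\langle \mathbf{d}, \mathbf{d} \rangle \geq 0$. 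On the other hand, again by (iii) applied with the roles symmetric is \emph{not} immediate, so instead I would compute $\langle \mathbf{d}, \mathbf{d} \rangle$ intrinsically inside $Coh(\mathbb{X})$: for $N \in \mathcal{T}$ one has $\langle \overline{N}, \overline{N} \rangle = \dim\End(N) - \dim\Ext^1(N,N)$ computed in $Coh(\mathbb{X})$, and the same for $M[-1] \in \mathcal{F} \subset Coh(\mathbb{X})$, giving $\langle \mathbf{d}, \mathbf{d} \rangle = \dim\End_{Coh}(M[-1]) - \dim\Ext^1_{Coh}(M[-1], M[-1])$.

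The cleaner route, which I expect to be the actual argument, uses the rank function. Recall that $\mathcal{F}[1]$ consists of shifts of sheaves $F$ with $\Hom(T,F) = 0$; since $T$ contains line bundles (every vector bundle on $\mathbb{X}$, hence $T$ if it has a bundle summand, admits a line-bundle filtration, and in general $T$ generates $\mathcal{D}^b$), vanishing of $\Hom(T,F)$ forces $F$ to have no sections against a generating family, which pushes $F$ toward small or negative slope; dually $\mathcal{T}$ contains all torsion sheaves and all bundles of sufficiently large slope. Concretely, torsion sheaves lie in $\mathcal{T}$ (as $\Ext^1(T, -)$ vanishes on $Coh_0$ when $T$ is a bundle, and in general for the sheaf-theoretic $T$ one still has $\mathcal{F} \cap Coh_0 = 0$ since $\Hom(T, S) \neq 0$ for any nonzero torsion sheaf $S$, because $T$ generates). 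So first I would show $\mathcal{F} \cap Coh_0(\mathbb{X}) = 0$, whence every $F \in \mathcal{F}$ is a nonzero bundle and $\mathrm{rank}(F) > 0$; therefore every nonzero class in $K_0^+(\mathcal{F}[1])$ has strictly negative rank (since $\overline{F[1]} = -\overline{F}$). Meanwhile every class in $K_0^+(\mathcal{T})$ has rank $\geq 0$: objects of $\mathcal{T}$ are genuine sheaves in $Coh(\mathbb{X})$ (no shift), so $\mathrm{rank} \geq 0$. A class that is simultaneously in both monoids would then have rank both $\geq 0$ and $< 0$ unless it is the zero class, contradiction; hence the intersection is trivial.

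The step I expect to be the main obstacle is establishing $\mathcal{F} \cap Coh_0(\mathbb{X}) = \{0\}$, i.e. that $\Hom(T, S) \neq 0$ for every nonzero torsion sheaf $S$. For this I would argue: if $\Hom(T,S) = 0$ then also $\Ext^1(T,S) = 0$ would be needed for $S$ to be in both $\mathcal{T}$ and $\mathcal{F}$, but the real content is that $S \in \mathcal{F}$ means $S[1]$ is a nonzero object of $\mathcal{D}^b(mod(C_T))$ concentrated in degree $-1$; by Theorem~\ref{T: indecan}(i) its indecomposable summands lie in $\mathcal{F}[1] \subset mod(C_T)$, which is fine, so this alone is not a contradiction. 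Instead, use that $T$ is a generator: $RHom(T, S) = 0$ would force $S = 0$ in $\mathcal{D}^b(Coh(\mathbb{X}))$. Since $Coh(\mathbb{X})$ has global dimension one, $RHom(T,S)$ has cohomology only in degrees $0$ and $1$, namely $\Hom(T,S)$ and $\Ext^1(T,S)$; if $S \in \mathcal{F}$ then $\Hom(T,S) = 0$, so we must rule out $\Ext^1(T,S) \neq 0$ as well. But $\Ext^1(T,S) \cong \Hom(S, T(\vec{w}))^*$ by Serre duality, and $T(\vec{w})$ is again a tilting sheaf (the Serre functor preserves tilting sheaves up to the equivalences involved, or one checks $\Ext^1(T(\vec{w}), T(\vec{w})) = 0$ directly); a nonzero map from a torsion sheaf $S$ to $T(\vec{w})$ has torsion image, so $T(\vec{w})$ has a nonzero torsion subsheaf — possible in general. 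So the honest obstacle is to control torsion in $T$: either invoke that almost-concealed-canonical tilting sheaves $T$ have $\Hom(S,T) = 0$ for torsion $S$ (equivalently $\Hom_{\mathbb{X}}(Coh_0, T) = 0$), which follows from $T$ having a bundle summand generating the bundle part together with the structure theory in \cite{Meltzer}, or directly cite \cite[Sect.~8.1]{Meltzer} that $Coh_0(\mathbb{X}) \subset \mathcal{T}$. Granting $Coh_0(\mathbb{X}) \subset \mathcal{T}$, we get $\mathcal{F} \cap Coh_0 = \mathcal{F} \cap \mathcal{T} \cap Coh_0$; but $\mathcal{T} \cap \mathcal{F} = \{X : \Hom(T,X) = \Ext^1(T,X) = 0\} = \{X : RHom(T,X) = 0\} = \{0\}$ since $T$ generates. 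This finishes the harder half, and the rank argument above then closes the lemma.
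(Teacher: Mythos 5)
Your ``cleaner route'' (the one you actually rely on) has a genuine gap: the key step $\mathcal{F}\cap Coh_0(\mathbb{X})=\{0\}$, equivalently $Coh_0(\mathbb{X})\subset\mathcal{T}$, is false for a general tilting sheaf --- precisely in the almost concealed-canonical cases that the paper is after. Take the squid tilting sheaf $T=\mathcal{O}\oplus\mathcal{O}(\vec{c})\oplus\bigoplus_i\bigoplus_{j=1}^{p_i-1}S_i^{(j)}$ of Section 3.2, example ii). Since $\Hom(\mathcal{O},S_i^{(s)})\neq 0$ only for $s\equiv -1$, and twisting by $\vec{c}=p_i\vec{x}_i$ fixes the $S_i^{(s)}$, one gets $\Hom(\mathcal{O},S_i^{(0)})=\Hom(\mathcal{O}(\vec{c}),S_i^{(0)})=0$, and $\Hom(S_i^{(j)},S_i^{(0)})=0$ for $1\leq j\leq p_i-1$; hence $S_i^{(0)}\in\mathcal{F}$, while $\Ext^1(S_i^{(1)},S_i^{(0)})\neq 0$ shows $S_i^{(0)}\notin\mathcal{T}$. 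So $\mathcal{F}$ does contain nonzero torsion sheaves (the paper itself has to cope with exactly this in the proof of Lemma~\ref{L:finitesum}), and your conclusion that every nonzero class of $K_0^+(\mathcal{F}[1])$ has strictly negative rank fails: $-\overline{S_i^{(0)}}$ has rank $0$. Your fallback, citing Meltzer for $Coh_0(\mathbb{X})\subset\mathcal{T}$ or for $\Hom(Coh_0,T)=0$, is only valid when $T$ is a vector bundle (the concealed-canonical case, where indeed $\Ext^1(T,S)\simeq\Hom(S,T(\vec{w}))^*=0$ for torsion $S$); you flag this obstacle honestly but then ``grant'' the false inclusion anyway, and the inconclusive Euler-form discussion in your first paragraph does not repair it.

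The lemma needs none of this; the paper's proof is one line. The cone $K_0^+\subset K_0$ of classes of coherent sheaves is pointed, because no nonzero object of $Coh(\mathbb{X})$ has zero class: its rank is $\geq 0$, and if the rank is $0$ it is a nonzero torsion sheaf, hence of positive degree. Now if $\alpha\in K_0^+(\mathcal{T})\cap K_0^+(\mathcal{F}[1])$, write $\alpha=\overline{N}=-\overline{M}$ with $N\in\mathcal{T}$, $M\in\mathcal{F}$; then $\overline{N\oplus M}=0$, forcing $N=M=0$ and $\alpha=0$. In other words, replacing your rank-only comparison by the pair (rank, degree) makes your strategy go through with no claim whatsoever about torsion sheaves in $\mathcal{F}$, and it then coincides with the paper's argument.
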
 
\begin{proof}
Indeed, no object of $Coh(\mathbb{X})$ has zero class in $K_0$.
\end{proof}

\vspace{.1in}

\begin{lem}\label{L:last} For any fixed $T$, there exists a rational number $\nu$ such that for any $\mathcal{G} \in \cT$ we have
$\mu_{min}(\mathcal{G}) \geq \nu$.
\end{lem}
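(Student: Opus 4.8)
The plan is to exploit the fact that $T$, being a coherent sheaf, has only finitely many Harder-Narasimhan factors, hence a well-defined $\mu_{\min}(T)$, and that the torsion class $\cT$ is defined by the vanishing of $Ext^1(T,-)$. First I would set $\nu_0 = \mu_{\min}(T) - \kappa$. Given $\mathcal{G} \in \cT$, suppose for contradiction that $\mu_{\min}(\mathcal{G}) < \nu_0$. Then the first HN factor $\mathcal{H}_1$ of $\mathcal{G}$ is a semistable sheaf of slope $\mu(\mathcal{H}_1) = \mu_{\min}(\mathcal{G}) < \mu_{\min}(T) - \kappa$, and $\mathcal{H}_1$ is a quotient of $\mathcal{G}$.

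Next I would produce a non-zero element of $Ext^1(T, \mathcal{G})$, contradicting $\mathcal{G} \in \cT$. The idea is: it suffices to find a direct summand (or HN factor) $T_j$ of $T$ with $Ext^1(T_j, \mathcal{H}_1) \neq 0$ and then lift this along the surjection $\mathcal{G} \tto \mathcal{H}_1$. For the latter, note that applying $Hom(T_j, -)$ to $0 \to \mathcal{K} \to \mathcal{G} \to \mathcal{H}_1 \to 0$ gives an exact sequence $Ext^1(T_j,\mathcal{G}) \to Ext^1(T_j,\mathcal{H}_1) \to Ext^2(T_j,\mathcal{K}) = 0$ since $Coh(\mathbb{X})$ has global dimension one; thus $Ext^1(T_j, \mathcal{H}_1) \neq 0$ forces $Ext^1(T_j,\mathcal{G}) \neq 0$, hence $Ext^1(T,\mathcal{G}) \neq 0$. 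For the former, take $T_j$ to be an HN factor of $T$ of minimal slope, so $T_j$ is semistable of slope $\mu_{\min}(T)$; since $\mu(\mathcal{H}_1) < \mu_{\min}(T) - \kappa$, i.e. $\mu(T_j) > \mu(\mathcal{H}_1) + \kappa$, part i) of Lemma 2.6 does \emph{not} directly give vanishing of $Ext^1(T_j, \mathcal{H}_1)$ — rather it is the Serre duality isomorphism $Ext^1(T_j,\mathcal{H}_1)^* \simeq Hom(\mathcal{H}_1, T_j(\vec{w}))$ that one wants to be non-zero. Here $T_j(\vec{w})$ is semistable of slope $\mu(T_j) + \kappa = \mu_{\min}(T) + \kappa$; but $\mu(\mathcal{H}_1) < \mu_{\min}(T) - \kappa \le \mu_{\min}(T)+\kappa$ only gives $\mu(\mathcal{H}_1) < \mu(T_j(\vec{w}))$, which makes $Hom(\mathcal{H}_1, T_j(\vec{w}))$ \emph{vanish}, the wrong direction. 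So I would instead pick $T_j$ to be an HN factor of $T$ of \emph{maximal} slope and reconsider: one really needs an HN factor $T_j$ with $\mu(\mathcal{H}_1) < \mu(T_j(\vec w))$ AND a non-zero map $T_j(\vec w) \to \mathcal{H}_1$ impossible... the clean route is: choose $\nu_0$ small enough that $\mu(\mathcal{H}_1)<\mu(T_j)$ for \emph{every} HN factor $T_j$ of $T$, i.e. $\nu_0 = \mu_{\min}(T)$, so that $Hom(\mathcal{H}_1, T_j) = 0$ for all $j$ by Lemma 2.6 i); then $Hom(\mathcal{H}_1, T) = 0$, and since $\mathcal{G} \tto \mathcal{H}_1$ this does not immediately help either. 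The actually-working argument: use that $\langle T, \mathcal{H}_1 \rangle = \dim Hom(T,\mathcal{H}_1) - \dim Ext^1(T,\mathcal{H}_1)$ and show the Euler form $\langle T, - \rangle$ becomes negative on sheaves of very negative slope.

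So the main step, and the main obstacle, is the following numerical input: the function $\mathcal{G} \mapsto \langle \overline{T}, \overline{\mathcal{G}} \rangle$ on $K_0$, expressed via rank and degree, has the shape $\langle \overline T, \overline{\mathcal{G}}\rangle = \mathrm{rank}(T)\deg(\mathcal{G})\cdot c_1 + (\text{terms bounded in terms of } \mathrm{rank}(\mathcal{G}), \mathrm{rank}(T))$ for an appropriate positive constant, using Lemma 2.5 and the explicit Euler form of Lemma 2.4 / the description of $N_{\up}$; concretely, for a semistable sheaf $\mathcal{H}$ of rank $r>0$ and slope $\mu$ one gets $\langle \overline T, \overline{\mathcal{H}} \rangle = r\,\mathrm{rank}(T)\bigl(C - \mu/p\bigr)$ for a constant $C$ depending only on $T$ and $\mathbb{X}$. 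Hence if $\mu(\mathcal{H}_1)$ is sufficiently negative (bounded below failing), $\langle \overline T, \overline{\mathcal{H}_1}\rangle < 0$, so $Ext^1(T, \mathcal{H}_1) \neq 0$, and by the global-dimension-one lifting argument above $Ext^1(T,\mathcal{G}) \neq 0$, contradicting $\mathcal{G}\in\cT$. The torsion (infinite-slope) HN factors of $\mathcal{G}$ cause no trouble since they contribute non-negatively, and $\mu_{\min}$ is attained at a genuine vector-bundle factor when $\mathcal{G}$ is not torsion (if $\mathcal{G}$ is torsion then $\mu_{\min}(\mathcal{G}) = \infty \ge \nu$ trivially). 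I therefore take $\nu$ to be any rational number below the resulting threshold. I expect the delicate point to be bookkeeping the constant $C$ correctly from the explicit bilinear form on $N_{\up}$ and checking the sign conventions, together with the (routine) reduction from $Ext^1(T,\mathcal{H}_1)\neq 0$ to $Ext^1(T,\mathcal{G})\neq 0$ via the long exact sequence and $\mathrm{gldim}\,Coh(\mathbb{X}) = 1$.
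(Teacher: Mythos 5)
Your final strategy is genuinely different from the paper's, and it can be made to work, but the key numerical step is not correct as written. The paper takes $\nu=\mu_{\min}(T)$ and argues as follows: since $\mathcal{H}_1$ is semistable of slope $<\mu_{\min}(T)$, one has $\Hom(T_i,\mathcal{H}_1)=0$ for every indecomposable summand $T_i$ of $T$, so it suffices to find $i$ with $\langle T_i,\mathcal{H}_1\rangle\neq 0$; this follows because under the derived equivalence the $T_i$ become the indecomposable projective $C_T$-modules, whose classes pair nondegenerately with $K_0$, so the Euler pairing against the $[T_i]$ detects any nonzero class. Your route replaces this nondegeneracy argument by a Riemann--Roch-type estimate forcing $\langle T,\mathcal{H}_1\rangle<0$ outright; the lifting step ($\Ext^1(T,\mathcal{H}_1)\neq 0\Rightarrow \Ext^1(T,\mathcal{G})\neq 0$ via the surjection $\mathcal{G}\tto\mathcal{H}_1$ and global dimension one) is common to both and is fine. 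What your approach buys is independence from the derived equivalence and from the $\Hom$-vanishing step; what it loses is the clean explicit bound $\nu=\mu_{\min}(T)$. (The long sequence of discarded attempts with $\mu_{\min}(T)-\kappa$ and Serre duality is dead weight and should simply be removed.)

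Two points in your numerical step need repair. First, on a weighted projective line the Euler form is \emph{not} a function of rank and slope alone, so the claimed exact identity $\langle\overline{T},\overline{\mathcal{H}}\rangle=r\,\mathrm{rank}(T)\,(C-\mu/p)$ is false; worse, it is inconsistent with the conclusion you draw from it, since $C-\mu/p\to+\infty$ as $\mu\to-\infty$, which would make the pairing large and \emph{positive} rather than negative. What is true (and what your first, hedged formulation gestures at, with the correct sign) is an estimate: writing the class of a rank-$r$ semistable sheaf via a line-bundle filtration as $re+N\delta+\sum_{i,s}c_{i,s}e_{i,s}$ with $0\le c_{i,s}\le r$, and using $\langle [T],\delta\rangle=\mathrm{rank}(T)$ and $Np\le \deg(\mathcal{H})=r\mu$, one gets
$$\langle T,\mathcal{H}\rangle \;\le\; \mathrm{rank}(T)\,\frac{r\mu}{p} \;+\; r\,C_T,$$
with $C_T$ depending only on the class of $T$. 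Second, and crucially, you must make the error term linear in $r$ with a constant depending only on $T$: the rank of $\mathcal{H}_1$ is unbounded as $\mathcal{G}$ ranges over $\cT$, so a correction merely \emph{bounded in terms of} $\mathrm{rank}(\mathcal{H}_1)$ would not produce a slope threshold valid uniformly on $\cT$. With the displayed estimate, the threshold $\mu< -pC_T/\mathrm{rank}(T)$ is independent of $r$ (note $\mathrm{rank}(T)>0$ since $T$ generates the derived category), and your argument then goes through.
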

\begin{proof} We will show that $\nu=\mu_{min}(T)$ verifies the above condition. Indeed, let $\mathcal{G} \in Coh(\mathbb{X})$ and let us assume that $\mu_{min}(\mathcal{G}) < \nu$. We will prove that $Ext^1(T,\mathcal{G})\neq \{0\}$, which implies that $\mathcal{G} \not\in \cT$. Let $\mathcal{H}_1$ be the semistable quotient of $\mathcal{G}$ of slope $\mu_{min}(\mathcal{G})$ (see Section~\textbf{2.5}), and
let us write $T =\bigoplus_i T_i$ for the direct sum decomposition into indecomposable sheaves. It is enough to prove that there
exists $i$ such that $Ext^1(T_i, \mathcal{H}_1) \neq \{0\}$. Because $\mathcal{H}_1$ is semistable of slope $\nu < \mu_{min}(T)$
we have $Hom(T_i, \mathcal{H}_1)=\{0\}$ for any $i$, and hence it is enough to check that there exists $i$ such that $\langle T_i, \mathcal{H}_1\rangle \neq 0$. In fact, we claim that for any $\alpha \in K_0$, $\alpha \neq 0$ there exists $i$ such that $\langle T_i, \alpha \rangle \neq 0$. Indeed, under the derived equivalence $\mathcal{D}^b(Coh(\mathbb{X}))\stackrel{\sim}{\to} \mathcal{D}^b(mod(C_T))$
the sheaves $T_i$ are mapped to the indecomposable projectives $P_i$, and for any module $M \in mod(C_T)$ there exists some $i$ such
that $Hom(P_i, M) \neq \{0\}$, which implies that $\langle P_i, M \rangle \neq 0$. This concludes the proof since the Grothendieck group
and the Euler form are invariant under any derived equivalence between categories of finite homological dimension. 
\end{proof}

\vspace{.1in}

\noindent
\textbf{Remark.} If $T$ is a vector bundle then the same argument yields the existence of a rational number $\eta$ such that
$\mu_{max}(\mathcal{G}) \leq \eta$ for any $\mathcal{G} \in \cF$.

\vspace{.1in}

\begin{lem}\label{L:finitesum} For any fixed $T$, for any $\gamma \in K_0^+$, the set of isomorphism classes of sheaves
$\mathcal{G}$ in $\cT$ (resp. in $\cF$) of class $\gamma$ is finite.\end{lem}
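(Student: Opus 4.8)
The plan is to reduce the claim, via Harder-Narasimhan filtrations and the boundedness statements already available, to the finiteness of isomorphism classes of \emph{semistable} sheaves of a fixed class, and then to the finiteness of moduli of semistable sheaves of fixed slope and rank. First I would treat the case of $\cT$: by Lemma~\ref{L:last} there is a rational number $\nu$ with $\mu_{min}(\mathcal{G})\geq\nu$ for all $\mathcal{G}\in\cT$. Fix $\gamma\in K_0^+$. For any $\mathcal{G}\in\cT$ of class $\gamma$, the Harder-Narasimhan factors $\mathcal{H}_1,\ldots,\mathcal{H}_s$ are semistable with slopes $\nu\leq\mu(\mathcal{H}_1)<\cdots<\mu(\mathcal{H}_s)$; since $rank(\mathcal{H}_i)\geq 0$ and $\sum_i rank(\mathcal{H}_i)=rank(\gamma)$, while the degrees $deg(\mathcal{H}_i)$ take values in $\frac{1}{p}\Z$ and sum to $deg(\gamma)$, and each individual slope is bounded below by $\nu$ and above by $\mu_{max}(\gamma)$ (which is finite unless $\gamma$ is a torsion class, a case handled separately below), there are only finitely many possibilities for the tuple of classes $(\overline{\mathcal{H}_1},\ldots,\overline{\mathcal{H}_s})$ in $K_0^+$. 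Hence it suffices to prove that for each fixed class $\beta\in K_0^+$ the set of isomorphism classes of semistable sheaves of class $\beta$ is finite, and then to bound the number of extensions assembling the $\mathcal{H}_i$ into $\mathcal{G}$ — but the latter is automatic since $Ext^1(\mathcal{H}_i,\mathcal{H}_j)$ is a finite-dimensional space over the finite field $k$, so there are finitely many sheaves with a given HN filtration shape and given factors.

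Next I would establish finiteness of semistable sheaves of a fixed class $\beta$. If $rank(\beta)=0$ then $\beta$ is a torsion class, and semistable sheaves of class $\beta$ are torsion sheaves of class $\beta$; these decompose as direct sums over the finitely many points $\lambda_i$ (plus the generic part over $\mathbb{P}^1$), and at each $\lambda_i$ the category $Coh_{0,\lambda_i}(\mathbb{X})$ is equivalent to nilpotent representations of a cyclic quiver over the finite field $k$, of which there are finitely many of bounded length — so finiteness is clear. If $rank(\beta)>0$, I would use the Serre functor and the boundedness of slopes: a semistable bundle $\mathcal{G}$ of slope $\mu$ with $\mu$ sufficiently large relative to $\kappa$ is generated by global sections and has no higher cohomology, so it is a quotient of $\mathcal{O}(\vec{u})^{\oplus m}$ for suitable $\vec{u},m$ depending only on $\beta$; twisting by a line bundle reduces the general semistable bundle to this case. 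The set of such quotients with fixed kernel class is the set of $k$-points of a Quot scheme of finite type, hence finite over the finite field $k$. This handles the bundle case, and a general semistable sheaf of finite slope is a bundle.

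For $\cF$ the argument is essentially dual. If $T$ is a tilting bundle, the Remark after Lemma~\ref{L:last} gives a bound $\mu_{max}(\mathcal{G})\leq\eta$ for $\mathcal{G}\in\cF$, and the same HN-plus-finiteness-of-semistables argument applies verbatim, using that the HN slopes are now bounded above by $\eta$ and below by $\mu_{min}(\gamma)$. If $T$ is only a tilting sheaf (not a bundle), $\cF$ may contain torsion sheaves as well; but $\cF$ is the right orthogonal $\{X\ |\ Hom(T,X)=0\}$, and by Theorem~\ref{T: indecan} every object of $\cF[1]$ lies in $mod(C_T)$ and has injective dimension at most one. One can run the HN argument directly: write $\mathcal{G}\in\cF$ with HN factors $\mathcal{H}_1,\ldots,\mathcal{H}_s$; the torsion part (if any) is the top factor $\mathcal{H}_s$ of slope $\infty$, which lies in $\cF$ and has bounded length as above, and the bundle part has bounded HN slopes by the same reasoning as for $\cT$ applied to the dual/orthogonality condition. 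In all cases the number of possible factor-class tuples is finite, each semistable factor class has finitely many realizations, and the assembling extension groups are finite-dimensional over $k$.

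The main obstacle is the finiteness of semistable sheaves of fixed rank and slope: everything else is bookkeeping with the discrete invariants $rank$ and $deg$ and with finite $Ext^1$-spaces over $k$, but this step genuinely requires an input of algebraic-geometric boundedness — the existence of a Quot scheme of finite type parametrizing the relevant quotients. I would phrase this via the ampleness of suitable line bundles $\mathcal{O}(\vec{u})$ on $\mathbb{X}$ (equivalently, via the analogous statement for parabolic bundles on $\mathbb{P}^1$, using the equivalence of Section~\textbf{2.2}), which is standard, but it is the one place where finiteness of the field alone is not enough and one really needs geometry.
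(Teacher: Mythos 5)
Your argument for $\cT$ (and for $\cF$ in the case where $T$ is a tilting bundle) is essentially the paper's: reduce via Lemma~\ref{L:last} to the finiteness of the set of sheaves of a fixed class with $\mu_{min}$ bounded below, a fact the paper simply quotes and which you flesh out through HN types, boundedness of semistable families and finiteness of $Ext^1$-groups over the finite field. That part is fine (modulo small slips such as ``bounded above by $\mu_{max}(\gamma)$'', which should be ``bounded above in terms of $\gamma$ and $\nu$'').

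The genuine gap is in the $\cF$ case when $T$ is not a vector bundle, which is exactly the case the paper flags as more involved. You assert that the torsion HN factor of $\mathcal{G}\in\cF$ ``has bounded length as above'' and that the bundle part has bounded HN slopes ``by the same reasoning as for $\cT$ applied to the dual/orthogonality condition''. Neither is justified. The orthogonality argument (the Remark after Lemma~\ref{L:last}, applied to the vector bundle summand $T_{vec}$) bounds the slopes of vector bundles in $\cF$ only from \emph{above} by some $\eta$; it gives no lower bound, and the paper explicitly notes that no two-sided bound exists here. Since only the total class $\gamma$ is fixed, and not the class of the torsion part, the splitting $\gamma=\gamma_1+\gamma_2$ into a bundle class and a torsion class is a priori unbounded: one can have $\gamma_1-n\delta$ and $\gamma_2+n\delta$ for arbitrarily large $n$ (bundles of fixed rank and arbitrarily negative degree exist, as do torsion sheaves of arbitrarily large degree), and the bound $\mu_{max}\leq\eta$ only bounds $deg(\gamma_2)$ from \emph{below}. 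So ``bounded length as above'' is precisely what must be proved; your earlier finiteness of torsion sheaves was for a fixed class $\beta$ and does not apply. The paper closes this gap with an argument you are missing: it writes $T=T_{vec}\oplus T_{tor}$, $T_{tor}=\bigoplus_i T_{tor,i}$, uses $Ext^1(T_{tor,i},T_{tor,i})=0$ to find $u_i\in\Z/p_i\Z$ with $[T_{tor,i}]\in\bigoplus_{l\neq u_i}\N[S_i^{(l)}]$, constrains the class $\gamma_2$ of the torsion part of a sheaf in $\cF$ to such a truncated cone, and bounds the Euler pairings $\langle\gamma_2,S_i^{(l)}\rangle$ in terms of $\gamma$ using $0\leq\langle V,S_i^{(l)}\rangle\leq rk(V)$ for any vector bundle $V$; the missing simple direction at each exceptional point together with the bounded consecutive differences leaves only finitely many possible $\gamma_2$, hence finitely many decompositions, after which your fixed-class finiteness arguments do apply. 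In other words, the real difficulty of the lemma is not the boundedness of semistable sheaves (which, as you say, is standard geometry), but the control of the torsion part of objects of $\cF$ via the condition $Hom(T,\mathcal{G})=0$, and your proposal does not address it.
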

\begin{proof} By Lemma~\ref{L:last}, there exists $\nu$ such that any $\mathcal{G} \in \cT$ satisfies $\mu_{min}(\mathcal{G}) \geq \nu$.
For any $\gamma$, the set of isomorphism classes of sheaves $\mathcal{G} \in Coh(\mathbb{X})$ satisfying $[\mathcal{G}]=\gamma$ and
$\mu_{min}(\mathcal{G}) \geq \nu$ is finite. This implies \textit{a fortiori} the statement of the Lemma concerning the category $\cT$. The
case of the category $\cF$ is slightly more involved since unless $T$ is a vector bundle there is no lower (or upper) bound for the slopes occuring in the Harder-Narasimhan types of sheaves in $\cF$. Let us write $T=T_{vec} \oplus T_{tor}$ where $T_{vec}$ is a vector bundle and $T_{tor}$ a torsion sheaf. By the same argument as in Lemma~\ref{L:last} there exists $\eta$ such that for any \textit{vector bundle}
$\mathcal{G} \in \cF$ we have $\mu_{max}(\mathcal{G}) \leq \eta$. Now, because $Ext^1(T_{tor}, T_{tor})=\{0\}$ we have
$T_{tor}=\bigoplus_{i=1}^N T_{tor,i}$ with $T_{tor,i}$ a torsion sheaf supported at the exceptional point $\lambda_i$ satisfying $Ext^1(T_{tor,i}, T_{tor,i})=\{0\}$. We deduce that there exists $u_i \in \Z/p_i\Z$ such that $[T_{tor,i}] \in \bigoplus_{l \neq u_i} \N [S_i^{(l)}]$
(see Appendix~A). We claim that for any fixed $\gamma$, there exist only finitely many decompositions $\gamma= \gamma_1 + \gamma_2$, with $\gamma_1$ the class of a vector bundle and $\gamma_2 \in \bigoplus_{i,l\neq u_i}\N [S^{(l)}_i]$. To see this, first note that because any vector bundle $V$ is an extension of line bundles and because $\langle \mathcal{L}, [S_i^{(l)}]\rangle \leq 1$ for any line bundle $\mathcal{L}$ and any simple torsion sheaf $S_i^{(l)}$, we have $0 \leq \langle {V}, S_{i}^{(l)}\rangle \leq rk(V)$ for any vector bundle $V$ and any $(i,l)$. It follows that for any sheaf $V \oplus T \in \cF$ of class $\gamma$ with $V$ a vector bundle and $T$ a torsion sheaf, we have 
$$\langle \gamma, S_i^{(l)}\rangle -rk(\gamma) \leq \langle T, S_i^{(l)}\rangle \leq \langle \gamma, S_i^{(l)}\rangle$$
for any $(i,l)$.
\end{proof}

\vspace{.2in}

\paragraph{\textbf{3.2. Examples~:}}  i) Let us choose the tilting bundle $T=\bigoplus_{0 \leq \vec{x} \leq \vec{c}} \mathcal{O}(\vec{x})$. Then, as shown by Geigle and Lenzing (\cite{GL}), $C_T$ is a \emph{canonical algebra}. This class of algebras was first introduced by Ringel in \cite{RingelCanonical}; they may also be directly defined as path algebras of certain quivers modulo some relations. More precisely, the \emph{canonical algebra} $C_T$ is isomorphic to the path algebra of the quiver

$$\xymatrix{
& \bullet \ar[r]^-{x_{1,1}} & \bullet \ar[r]^-{x_{1,2}} & \cdots \ar[r]^-{x_{1,p_1-1}} & \bullet \ar[dr]^-{x_{1,p_1}} & &\\
0 \bullet \ar[ru]^-{x_{1,0}} \ar[rd]_-{x_{N,0}} & \vdots & \vdots &\vdots & \vdots& \bullet 1\\
& \bullet \ar[r]^-{x_{N,1}} & \bullet \ar[r]^-{x_{N,2}} & \cdots \ar[r]^-{x_{N,p_N-1}} & \bullet \ar[ur]_-{x_{N,p_N}} &}
$$
modulo the $(N-2)$ relations
$$
 \prod_{j=0}^{p_i} x_{i,p_{i}-j} = \prod_{j=0}^{p_1} x_{1,p_{1}-j} - \lambda_i \prod_{j=0}^{p_2} x_{2,p_{2}-j},
$$
for each $i=3,4,\ldots, N$.

\vspace{.1in}

\noindent
ii) Now let us choose the tilting sheaf $T=\mathcal{O} \oplus \mathcal{O}(\vec{c}) \oplus \bigoplus_{i=1}^N \bigoplus_{j=1}^{p_{i-1}} S_{i}^{(j)}$. In that case, $C_T$ is a squid algebra (as first introduced by Brenner and Butler, \cite{BB}). By definition, it is the path algebra of following quiver
$$\xymatrix{
& &\bullet \ar[r]^-{x_{1,2}} & \bullet \ar[r]^-{x_{1,3}} & \cdots \ar[r]^-{x_{1,p_1-1}} &\bullet\\
0 \bullet\ar@<1ex>[r]^-{a} \ar@<-1ex>[r]_-{b} & \bullet 1  \ar[dr]_-{x_{N,1}} \ar[ur]^-{x_{1,1}}  &  \vdots & \vdots &\vdots &\vdots\\
 & &\bullet \ar[r]^-{x_{N,2}} & \bullet \ar[r]^-{x_{N,3}} & \cdots \ar[r]^-{x_{N,p_N-1}} & \bullet}
$$
modulo the set of relations
$$ x_{1, 1}a=0, \qquad x_{2,1}b=0, \qquad x_{i,1}(\lambda_i a-b)=0 \qquad \forall\; i=3, \ldots, N.$$

\vspace{.2in}

\section{Hall algebra of a weighted projective line}

\vspace{.1in}

From now on and unless specified, $k$ will be fixed to be a finite field, and we set $q= |k|, v=q^{-1/2}$.

\vspace{.1in}

\paragraph{\textbf{4.1.}} Let $\mathbf{H}_{\mathbb{X}}$ be the Hall algebra of the category $Coh(\mathbb{X})$. As a vector space, $\mathbf{H}_{\mathbb{X}}$ is the space of all functions $f : \mathcal{M} \to \C$ with finite support, where $\mathcal{M}$ is the set of all isomorphism classes of objects in $Coh(\mathbb{X})$. The multiplication is defined by the following formula (see e.g. \cite{LecturesI} for details)~: 
$$f \star g (\mathcal{G}) = \sum_{\mathcal{H} \subset \mathcal{G}} v^{\langle \mathcal{G}/\mathcal{H}, \mathcal{H} \rangle} f(\mathcal{G}/\mathcal{H}) g(\mathcal{H}).$$
Note that because of the support conditions on $f$ and $g$, only finitely many terms in the sum are nonzero.  The 
comultiplication is defined as
$$\Delta(f)(\mathcal{G},\mathcal{H})=\frac{v^{ \langle\mathcal{G},\mathcal{H}\rangle} }{|Ext^1(\mathcal{G},\mathcal{H})|}\sum_{\xi \in Ext^1(\mathcal{G},\mathcal{H})} f(X_{\xi})$$
where $X_{\xi}$ is the extension of $\mathcal{G}$ by $\mathcal{H}$ classified by $\xi$. Note that $\Delta$ takes values in a certain formal completion of $\mathbf{H}_{\mathbb{X}} \otimes \mathbf{H}_{\mathbb{X}}$ only.
The space $\mathbf{H}_{\mathbb{X}}$ carries a natural $K_0^+$-grading coming from the decomposition
$\mathcal{M}=\bigsqcup_{\alpha} \mathcal{M}_{\alpha}$. The triple $(\mathbf{H}_{\mathbb{X}}, *, \Delta)$ is a $K_0^+$-graded (twisted, topological) bialgebra. More precisely, we have
$$\Delta( f \star g)= \Delta(f) \star \Delta(g)$$
where the algebra structure on $\mathbf{H}_{\mathbb{X}} ^{\otimes 2}$ is defined as
$$(u_{\a} \otimes v_{\beta}) \star (u_{\alpha'} \otimes v_{\beta'})= v^{\langle \beta,\alpha'\rangle + \langle \alpha', \beta\rangle} (u_{\a} \star u_{\alpha'} \otimes v_{\beta} \star v_{\beta'})$$
for $u_{\gamma} \in \mathbf{H}_{\mathbb{X}}[\gamma]$ for any $\gamma \in \{\a,\a',\b,\b'\}$.
The last piece of structure which we will need is the Green pairing
\begin{equation}
\begin{split}
(\;|\;) ~: \mathbf{H}_{\mathbb{X}} \otimes \mathbf{H}_{\mathbb{X}} & \to \Q\\
f\otimes g &\mapsto \sum_{\mathcal{G}} \frac{f(\mathcal{G})g(\mathcal{G})}{|Aut(\mathcal{G})|}.
\end{split}
\end{equation}
It is a nondegenerate Hopf pairing, i.e $(a\star b, c)=(a \otimes b, \Delta(c))$.

\vspace{.1in}

\paragraph{\textbf{4.2.}} The product in the Hall algebra has the following geometric interpretation. Let $\mathbf{Coh}_{\a}$ be the stack classifying coherent sheaves on $\mathbb{X}$ of class $\a$. Let us also denote by $\widetilde{\mathbf{Coh}}_{\a,\b}$ the stack parametrizing inclusions $\mathcal{G} \subset \mathcal{H}$ of a coherent sheaf of class $\b$ in a coherent sheaf of class $\a+\b$. There is a convolution diagram
$$\xymatrix{ \mathbf{Coh}_{\a} \times \mathbf{Coh}_{\b} & \widetilde{\mathbf{Coh}}_{\a,\b} \ar[l]_-{q} \ar[r]^-{p} & \mathbf{Coh}_{\a+\b}}$$
in which the map $p$ associates to an inclusion $\mathcal{G}\subset \mathcal{H}$ the sheaf $\mathcal{H}$ while the
map $q$ associates to $\mathcal{G}\subset \mathcal{H}$ the pair of sheaves $(\mathcal{H}/\mathcal{G}, \mathcal{G})$. The map $p$ is proper while the map $q$ is a stack vector bundle of rank $-\langle \b, \a \rangle$. By definition, $\mathbf{H}_{\mathbb{X}}[\gamma]=Fun_0(\mathbf{Coh}_{\gamma}(k))$ is the space of function on $\mathbf{Coh}_{\gamma}(k)$ with finite support. Using these notations, the multiplication map $\star : \mathbf{H}_{\mathbb{X}}[\a] \otimes \mathbf{H}_{\mathbb{X}}[\b] \to \mathbf{H}_{\mathbb{X}}[\a+\b]$ is equal to 
$$f \star g= v^{-\langle \b,\a\rangle}p_*q^{*}(f \otimes g).$$

\vspace{.1in}

\paragraph{\textbf{4.3.}} For any $\gamma \in K_0^+$ we let
$$\m1_{\gamma}=1_{\mathcal{M}_{\gamma}}=\sum_{\substack{\mathcal{G} \in Coh(\mathbb{X})\\ \overline{\mathcal{G}}=\gamma}} 1_{\mathcal{G}}$$ 
be the characteristic function of the set of all sheaves of class $\gamma$. Unless $\gamma$ is a torsion class, $\m1_{\gamma}$ only belongs to the formal completion $\widehat{\mathbf{H}}_{\mathbb{X}}=\bigoplus_{\gamma}\widehat{\mathbf{H}}_{\mathbb{X}}[\gamma]$ where $\widehat{\mathbf{H}}_{\mathbb{X}}[\gamma]$ consists of all
functions $f : \mathcal{M}_{\gamma} \to \C$. 
For $\gamma \in K^+_0$ we let $\m1^{ss}_{\gamma}$ be the characteristic function of the set of semistable sheaves of class $\gamma$. This is a genuine element of $\mathbf{H}_{\mathbb{X}}$ because the set of such sheaves is finite. It can be shown (see \cite[Thm. 3.6]{Lin}) that $\m1^{ss}_{\gamma}\in \mathbf{H}^{sph}_{\mathbb{X}}$ for any $\gamma$. 

We define the \textit{spherical Hall algebra} $\mathbf{H}_{\mathbb{X}}^{sph}$ to be the subalgebra of $\mathbf{H}_{\mathbb{X}}$ generated by the collection of elements $\{\m1_{\gamma}\;|\; \gamma \in K_0^{+,tor}\}$ and $\{1_{\mathcal{L}}\;|\; \mathcal{L} \in Pic(\mathbb{X})\}$. It can be shown (see e.g. \cite[Section 4.6]{LecturesI} that $\mathbf{H}^{sph}_{\mathbb{X}}$ is a graded (twisted, topological) sub-bialgebra of $\mathbf{H}_{\mathbb{X}}$, and
that the restriction of the Green pairing to $\mathbf{H}^{sph}_{\mathbb{X}}$ is still nondegenerate. Of importance for us will be that the spherical Hall algebra admits an integral form in the following sense. Set
$$R=\C[t,t^{-1}], \qquad R_{loc}=R[(t^{l}-1)^{-1}\;|\; l \geq 1].$$
 
 The following is proved in \cite[Thm. 2.11]{Lin}~:
\vspace{.1in} 

\begin{theo}[Lin]\label{T:Lin} There exists a (twisted, topological) bialgebra ${}_R\mathbf{H}^{sph}_{\up}$ defined over $R$, generated by elements
$\{{}_R\m1_{\vec{u}}\;|\; \vec{u}\in L_{\up}\}$ and $\{\m1_{\gamma}\;|\; \gamma \in K_0^{+,tor}\}$, equipped with
a $R$-bilinear Hopf pairing 
$$(\;|\;)~:{}_R\mathbf{H}^{sph}_{\up} \otimes {}_R\mathbf{H}^{sph}_{\up} \to R_{loc}$$
satisfying the following property : for any finite field $k$, for any weighted projective line $\mathbb{X}$ of type $\up$ defined over $k$, the assignement
$t \mapsto |k|^{-1/2}, {}_R\m1_{\vec{u}} \mapsto \m1_{\mathcal{O}(\vec{u})}, {}_R\m1_{\gamma} \mapsto \m1_{\gamma}$
extends to a bialgebra morphism
$$ev_k~:{}_R\mathbf{H}^{sph}_{\up}\to \mathbf{H}^{sph}_{\mathbb{X}}.$$
Moreover, for any $\gamma \in K_0^+$ there exists an element ${}_R\m1^{ss}_{\gamma} \in {}_R\mathbf{H}^{sph}_{\up}$
such that for any $k, \mathbb{X}$ as above, $ev_k ({}_R\m1^{ss}_{\gamma})=\m1_{\gamma}^{ss}$.
\end{theo}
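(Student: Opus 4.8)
The plan is to construct ${}_R\mathbf{H}^{sph}_{\up}$ as a ``generic'' model of the Hall algebras $\mathbf{H}^{sph}_{\mathbb{X}}$. The core claim to establish is that, on a suitable combinatorial spanning set, all the structure data of $\mathbf{H}^{sph}_{\mathbb{X}}$ --- the product $\star$, the coproduct $\Delta$, and the Green pairing $(\;|\;)$ --- are given by \emph{universal} expressions depending only on the type $\up$: Laurent polynomials in $v=q^{-1/2}$ for $\star$ and $\Delta$, and rational functions whose denominators are products of factors $t^l-1$ for $(\;|\;)$. Granting this, one \emph{defines} ${}_R\mathbf{H}^{sph}_{\up}$ to be the free $R$-module on that index set, with product, coproduct and pairing read off from these universal expressions (the coproduct valued, as in $\mathbf{H}_{\mathbb{X}}$, in a completion of the tensor square, which carries the usual twisted algebra structure), taking values in $R$ resp. $R_{loc}$. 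The bialgebra axioms and the Hopf property of $(\;|\;)$ then hold over $R$ because each is, coefficientwise, an identity of Laurent polynomials (resp. of elements of $R_{loc}$) which holds after every specialization $t\mapsto|k|^{-1/2}$, hence for infinitely many values of the variable, hence identically; and the evaluation morphisms $ev_k$ are then tautological.

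To prove the core claim, I would first set up the combinatorial bookkeeping. Splitting $Coh(\mathbb{X})$ into vector bundles and torsion sheaves, and splitting torsion sheaves by their support on $\mathbb{P}^1$: torsion sheaves at an exceptional point $\lambda_i$ are parametrized by partitions uniformly in $k$, via the equivalence with nilpotent representations of the cyclic quiver of type $A_{p_i-1}^{(1)}$ recalled in Section~\textbf{2.2}; torsion sheaves away from the $\lambda_i$ are governed by the closed points of $\mathbb{P}^1\setminus\{\lambda_1,\dots,\lambda_N\}$, whose number in each degree is a polynomial in $q$; and line bundles are parametrized by $L_{\up}\cong Pic(\mathbb{X})$ independently of $k$. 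Then the needed polynomiality follows from: (a) the Hall numbers and the orders $|Aut(\mathcal{G})|$ entering products of the generators are polynomials in $q$ --- for the cyclic-quiver part this is the well-known integral form of its composition subalgebra, for the generic torsion part it is the classical point-counting description of the Hall algebra of $\mathbb{P}^1$, and for line bundles $\dim_k\Hom(\mathcal{O}(\vec u),\mathcal{O}(\vec v))=\dim_k S(\up,\ul)_{\vec v-\vec u}$ is the Hilbert function of $S$, depending only on $\up$; (b) the Euler form is purely combinatorial by Lemma~\ref{L:kzero}; (c) the coproduct structure constants, governed by subobject counts of the same generators, are likewise polynomial (alternatively, they are forced by the Hopf compatibility with the nondegenerate pairing). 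Hence the monomials $\m1_{\mathcal{O}(\vec u_1)}\star\cdots\star\m1_{\mathcal{O}(\vec u_a)}\star\m1_{\gamma_1}\star\cdots\star\m1_{\gamma_b}$ span $\mathbf{H}^{sph}_{\mathbb{X}}$ and have structure constants of the desired universal shape.

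To upgrade this spanning set to an honest $R$-form I would exhibit a PBW-type basis: order the rational slopes, decompose each $\m1_\gamma$ along its Harder--Narasimhan types into products $\m1^{ss}_{\gamma_1}\star\cdots\star\m1^{ss}_{\gamma_s}$ with strictly increasing slopes (Section~\textbf{2.5}), and take the resulting ordered monomials in the $\m1^{ss}_\beta$ as candidate basis. Over each $\mathbb{X}$ these are linearly independent, and the graded dimensions $\dim_{\C}\mathbf{H}^{sph}_{\mathbb{X}}[\gamma]$ are independent of $q$ because $\mathbf{H}^{sph}_{\mathbb{X}}$ carries a combinatorial PBW basis of quantum-group type (from its presentation by generators and relations of toroidal type, with cyclic-quiver relations at the exceptional points); hence the candidate monomials form an $R$-basis and the structure maps of the previous step are well-defined over $R$. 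Finally ${}_R\m1^{ss}_\gamma$ is obtained by the triangular inversion of the Harder--Narasimhan recursion $\m1_\gamma=\sum v^{a_{\underline\gamma}}\,\m1^{ss}_{\gamma_1}\star\cdots\star\m1^{ss}_{\gamma_s}$; this inversion has coefficients in $\Z[v,v^{-1}]\subseteq R$ and is insensitive to the field, so $ev_k({}_R\m1^{ss}_\gamma)=\m1^{ss}_\gamma$ by construction.

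The step I expect to be the main obstacle is the passage from a spanning set to an $R$-form, i.e. the fact that $\dim_{\C}\mathbf{H}^{sph}_{\mathbb{X}}[\gamma]$ does not jump with $q$. This is a uniform statement about the spherical Hall algebra --- a PBW basis theorem --- and I expect it to require importing the structural description of $\mathbf{H}^{sph}_{\mathbb{X}}$ (its presentation, equivalently its identification with the positive part of a quantum toroidal-type algebra) rather than any elementary count. Everything else --- point-counting on $\mathbb{P}^1\setminus\{\lambda_i\}$, the cyclic-quiver Hall-algebra computations at the exceptional points, the Hilbert function of $S$, and the Harder--Narasimhan inversion --- is routine and manifestly field-independent.
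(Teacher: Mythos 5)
There is an important mismatch of expectations here: the paper does not prove this statement at all. It is quoted verbatim as a result of Lin (\cite[Thm. 2.11]{Lin}, together with \cite[Thm. 3.6]{Lin} for the last assertion about ${}_R\mathbf{1}^{ss}_{\gamma}$), and the present paper only uses it as a black box. So there is no internal proof to compare yours with; what can be assessed is whether your sketch would stand on its own, and it does not yet.

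Your overall strategy (universal structure constants on a combinatorially indexed basis, define the generic $R$-form, get $ev_k$ by specialization, check axioms at infinitely many $t$) is indeed the expected one, and the easy ingredients you list (polynomial point count of $\mathbb{P}^1\setminus\{\lambda_i\}$, partitions/cyclic-quiver combinatorics at exceptional points, the Hilbert function of $S(\up,\ul)$, the combinatorial Euler form) are fine. The genuine gap is exactly at the step you flag and then wave through. First, there is a circularity in your steps (a)--(c): before you have a basis indexed by data independent of $k$, the phrase ``the product of two generators is given by a universal expression'' has no meaning, because an element of $\mathbf{H}^{sph}_{\mathbb{X}}$ is a function on isomorphism classes of sheaves, and for higher rank (already in tubular, let alone wild, weight types) these classes cannot be matched across different finite fields. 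So the basis theorem must come first, and it cannot be extracted from (a)--(c). Second, your justification of that basis theorem --- ``$\mathbf{H}^{sph}_{\mathbb{X}}$ carries a PBW basis of quantum-group type from its presentation by generators and relations of toroidal type'' --- is an appeal to a result at least as deep as the one being proved, and not available in the generality you need: the known identifications with quantum loop/toroidal algebras (e.g. \cite{SDuke}) concern the composition subalgebra built from line bundles and the torsion sheaves at the exceptional points, whereas $\mathbf{H}^{sph}_{\mathbb{X}}$ here is generated by $\mathbf{1}_{\gamma}$ for \emph{all} torsion classes $\gamma$, which brings in the full Hall algebra of torsion sheaves on $\mathbb{P}^1$ (a Heisenberg-type part depending on the point count), and no elementary presentation of this algebra is on the table. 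Establishing the $q$-independence of the graded dimensions and the universality of the structure constants and of the Green pairing in a genuine $R$-basis is the actual content of Lin's theorem, and your sketch assumes it rather than proves it.

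A second, smaller but real, gap is your last step: obtaining ${}_R\mathbf{1}^{ss}_{\gamma}$ by ``triangular inversion of the Harder--Narasimhan recursion.'' For $\gamma$ of positive rank there are infinitely many HN types of class $\gamma$ (semistable subquotients can have arbitrarily negative degree), so both the recursion and its inversion involve infinite sums and a priori live only in the completion $\widehat{\mathbf{H}}_{\mathbb{X}}$; the elements $\mathbf{1}_{\alpha}$ of nonzero rank are not even in $\mathbf{H}_{\mathbb{X}}$. Showing that $\mathbf{1}^{ss}_{\gamma}$ nevertheless lies in the uncompleted spherical algebra, and admits an integral lift, requires a truncation/induction argument of the kind the present paper itself has to run in the proof of Proposition~\ref{T:1} (and which is the content of \cite[Thm. 3.6]{Lin}); a one-line inversion does not deliver it.
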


\vspace{.1in}

\paragraph{\textbf{4.3.}} Let us now fix a tilting sheaf $T$ and define $\mathcal{F}, \mathcal{T}$ as in Section~\textbf{3}. The next result is crucial for the proof of our main theorem.

\begin{prop}\label{T:1} For any $\ga \in K_0^+(\cT)$ we have $\mathbf{1}^{\cT}_{\ga} \in \mathbf{H}_{\mathbb{X}}^{sph}$. Moreover, there is an element ${}_R\mathbf{1}_{\ga}^{\cT} \in {}_R\mathbf{H}^{sph}_{\underline{p}}$ such that for any finite field $k$ and any weighted projective line $\mathbb{X}$ of type $\underline{p}$ defined over $k$ we have
$$\mathbf{1}^{\cT}_{\ga}= ev_k ({}_R\mathbf{1}_{\ga}^{\cT}).$$
The same holds for $\cF$ in place of $\cT$.
\end{prop}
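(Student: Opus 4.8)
The plan is to produce, for each class $\ga$, a \emph{finite} expression for $\mathbf{1}^{\cT}_{\ga}$ as an $R_{loc}$‑linear combination of products of the generators $\mathbf{1}_{\b}$ ($\b\in K_0^{+,tor}$), $\mathbf{1}_{\mathcal{L}}$ ($\mathcal{L}\in Pic(\mathbb{X})$) and of the elements $\mathbf{1}^{ss}_{\bd}$ — all of which, by Theorem~\ref{T:Lin}, lie in $\mathbf{H}^{sph}_{\mathbb{X}}$ and admit lifts to ${}_R\mathbf{H}^{sph}_{\up}$ — in such a way that the coefficients and all the combinatorial data entering the formula depend only on $\up$ and on the combinatorial type of $C_T$. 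Such a formula yields the three assertions at once. Note that by Lemma~\ref{L:finitesum} the function $\mathbf{1}^{\cT}_{\ga}$ is already a genuine (finitely supported) element of $\mathbf{H}_{\mathbb{X}}$. For a rational $\nu$ let $\mathcal{C}^{\geq\nu}$, $\mathcal{C}^{<\nu}$ be the full subcategories of sheaves $\mathcal{G}$ with $\mu_{min}(\mathcal{G})\geq\nu$, resp. $\mu_{max}(\mathcal{G})<\nu$; these are extension–closed, and the Harder–Narasimhan recursion in $\mathbf{H}_{\mathbb{X}}$ gives
$$\mathbf{1}^{\mathcal{C}^{\geq\nu}}_{\ga}=\sum_{\substack{\bd_1+\cdots+\bd_s=\ga\\ \nu\leq\mu(\bd_1)<\cdots<\mu(\bd_s)}} v^{-\sum_{i<j}\langle\bd_i,\bd_j\rangle}\ \mathbf{1}^{ss}_{\bd_1}\star\cdots\star\mathbf{1}^{ss}_{\bd_s},$$
and similarly for $\mathbf{1}^{\mathcal{C}^{<\nu}}_{\ga}$. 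For a \emph{fixed} $\ga$ the ranks of the $\bd_i$ are bounded, so the slope conditions confine their degrees to a finite set and these sums are finite; hence $\mathbf{1}^{\mathcal{C}^{\geq\nu}}_{\ga},\mathbf{1}^{\mathcal{C}^{<\nu}}_{\ga}\in\mathbf{H}^{sph}_{\mathbb{X}}$, with lifts, uniformly in the field.

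Set $\nu_0=\mu_{min}(T)$. By Lemma~\ref{L:last} we have $\cT\subseteq\mathcal{C}^{\geq\nu_0}$, and dually $\mathcal{C}^{<\nu_0}\subseteq\cF$ (a nonzero map $T\to X$ would have image both a quotient of $T$, of slope $\geq\nu_0$, and a subsheaf of $X$, of slope $<\nu_0$). Restricting the torsion pair $(\cT,\cF)$ to the torsion class $\mathcal{C}^{\geq\nu_0}$ produces the torsion pair $(\cT,\ \cF\cap\mathcal{C}^{\geq\nu_0})$ on it, whence
$$\mathbf{1}^{\mathcal{C}^{\geq\nu_0}}_{\ga}=\sum_{\a+\b=\ga} v^{-\langle\b,\a\rangle}\ \mathbf{1}^{\cF\cap\mathcal{C}^{\geq\nu_0}}_{\b}\star\mathbf{1}^{\cT}_{\a};$$
likewise restricting $(\mathcal{C}^{\geq\nu_0},\mathcal{C}^{<\nu_0})$ to $\cF$ gives the triangular relation $\mathbf{1}^{\cF}_{\ga}=\sum_{\a+\b=\ga}v^{-\langle\b,\a\rangle}\mathbf{1}^{\mathcal{C}^{<\nu_0}}_{\b}\star\mathbf{1}^{\cF\cap\mathcal{C}^{\geq\nu_0}}_{\a}$, which can be inverted to express $\mathbf{1}^{\cF\cap\mathcal{C}^{\geq\nu_0}}_{\ga}$ in terms of $\mathbf{1}^{\cF}_{\ga}$ and of strictly smaller data. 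Combining the two and arguing by induction on $\ga$ (with respect to the partial order in which $\a\prec\ga$ when $\a$ and $\ga-\a$ are nonzero effective classes realized in the regions at hand — a finite poset for each $\ga$, hence well founded), all terms other than $\mathbf{1}^{\cT}_{\ga}$ and $\mathbf{1}^{\cF}_{\ga}$ become known, so one obtains at each step $\mathbf{1}^{\cT}_{\ga}+\mathbf{1}^{\cF}_{\ga}\in\mathbf{H}^{sph}_{\mathbb{X}}$, with a lift over $R$.

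What remains, and is the heart of the matter, is to \emph{separate} $\mathbf{1}^{\cT}_{\ga}$ from $\mathbf{1}^{\cF}_{\ga}$ inside $\mathbf{H}^{sph}_{\mathbb{X}}$ (their supports are disjoint for $\ga\neq0$). I would control this from the opposite end of the slope range. When $T$ is a tilting \emph{bundle}, the Remark after Lemma~\ref{L:last} gives $\eta$ with $\cF\subseteq\mathcal{C}^{\leq\eta}$, and one checks $\mathcal{C}^{>\eta'}\subseteq\cT$ for $\eta'$ large enough (any $\mathcal{G}$ with $\mu_{min}(\mathcal{G})>\mu_{max}(T(\vec w))$ satisfies $\Hom(\mathcal{G},T(\vec w))=0$, i.e. $\Ext^1(T,\mathcal{G})=0$); together with the torsion‑pair identities this confines everything to the finite set of Harder–Narasimhan types with slopes in a bounded window, and for general $T$ one additionally truncates at the exceptional points, using that $T_{tor}=\bigoplus_i T_{tor,i}$ is rigid. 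By the Harder–Narasimhan recursion it then suffices to know, for every $\bd$, that the indicator $\mathbf{1}^{ss,\cT}_{\bd}$ of semistable sheaves of class $\bd$ lying in $\cT$ — and likewise $\mathbf{1}^{ss,\cF}_{\bd}$ — lies in $\mathbf{H}^{sph}_{\mathbb{X}}$ with a lift. This is clear for $\bd$ of slope exceeding the bound fixed by the Harder–Narasimhan type of $T(\vec w)$ and the $T_{tor}$‑data, where $\mathbf{1}^{ss,\cT}_{\bd}=\mathbf{1}^{ss}_{\bd}$; for $\bd$ a torsion class, $\mathbf{1}^{ss,\cT}_{\bd}$ factors over the points $\lambda_i$ into indicators of torsion classes of the tubes $Coh_{0,\lambda_i}(\mathbb{X})\simeq$ (nilpotent representations of the cyclic quiver $A_{p_i-1}^{(1)}$), and since $T_{tor,i}$ is rigid — so (Appendix~A) $[T_{tor,i}]\in\bigoplus_{l\neq u_i}\N[S_i^{(l)}]$ and $\{S:\Ext^1(T,S)=0\}\cap Coh_{0,\lambda_i}(\mathbb{X})$ is an explicit torsion class of the tube — a Harder–Narasimhan recursion internal to that tube writes the indicator as a finite combination of the $\mathbf{1}_{\b}$, which are in $\mathbf{H}^{sph}_{\mathbb{X}}$ by definition.

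There remain only finitely many "small‑slope" non‑torsion classes $\bd$, and for these the locus of semistable sheaves of class $\bd$ lying in $\cT$ is a genuine, proper constructible subset; showing that its indicator is spherical is the one point that is not formal, and I expect it to be the main obstacle. I would resolve it either by a further stratification exploiting the rigidity of $T$ — equivalently, by transporting the condition $\Ext^1(T,-)=0$ through the derived equivalence $RHom(T,?):\mathcal{D}^b(Coh(\mathbb{X}))\stackrel{\sim}{\to}\mathcal{D}^b(mod(C_T))$ — or by using the comultiplication $\Delta$ (which is particularly simple on the truncations $\mathbf{1}^{\mathcal{C}^{\geq\nu}}_{\ga}$, extensions of objects of $\mathcal{C}^{\geq\nu}$ lying again in $\mathcal{C}^{\geq\nu}$) together with the non‑degeneracy of the Green pairing on $\mathbf{H}^{sph}_{\mathbb{X}}$ to recover $\mathbf{1}^{\cT}_{\ga}$ and $\mathbf{1}^{\cF}_{\ga}$ individually from their sum. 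Finally, every step above takes place at the level of $K_0\simeq N_{\up}$ and is dictated by $\up$ and the combinatorial type of $C_T$ alone; hence the resulting expression for $\mathbf{1}^{\cT}_{\ga}$ is the image under $ev_k$ of a fixed element ${}_R\mathbf{1}^{\cT}_{\ga}\in{}_R\mathbf{H}^{sph}_{\up}$, and the same argument applies verbatim to $\cF$.
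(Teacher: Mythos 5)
Your first two paragraphs essentially reproduce the paper's starting point (truncate the torsion-pair identity to the subcategory of sheaves with $\mu_{min}\geq \nu_0$ so that it becomes a finite identity in $\mathbf{H}_{\mathbb{X}}$, and use the fact that the Harder--Narasimhan truncations are spherical with integral lifts, via Theorem~\ref{T:Lin} and \cite[Section 3.3]{Lin}), but the argument does not close. You end with an explicitly unresolved step: ``separating'' $\mathbf{1}^{\cT}_{\ga}$ from $\mathbf{1}^{\cF}_{\ga}$, which you reduce to the sphericity of the indicator of semistable sheaves of a given class lying in $\cT$ for finitely many ``small-slope'' classes, and for which you only offer two candidate strategies (``I expect it to be the main obstacle''). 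That reduction is neither carried out nor needed, so as written this is a genuine gap; moreover the ``inversion'' of your second triangular relation $\mathbf{1}^{\cF}_{\ga}=\sum v^{-\langle\b,\a\rangle}\mathbf{1}^{\mathcal{C}^{<\nu_0}}_{\b}\star\mathbf{1}^{\cF\cap\mathcal{C}^{\geq\nu_0}}_{\a}$ presupposes a finiteness statement you never verify (only finitely many $\b$ with $\mu_{max}<\nu_0$ can occur in decompositions of $\ga$ whose complementary class is realized in $\cF\cap\mathcal{C}^{\geq\nu_0}$; this is exactly where an argument in the spirit of Lemma~\ref{L:finitesum} has to enter, and you invoke that lemma only for a different purpose).

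The missing idea is that no separation problem arises at all if one proves the statements for $\cT$ and $\cF$ \emph{simultaneously}, by induction on the rank of $\ga$ (and on the degree for torsion classes), which is what the paper does. The key observation is that for $\ga\neq 0$ at most one of $\mathbf{1}^{\cT}_{\ga}$, $\mathbf{1}^{\cF}_{\ga}$ is nonzero: if $X\in\cT$ and $Y\in\cF$ have the same class, then $\langle [T],[X]\rangle=\dim\Hom(T,X)\geq 0$ while $\langle [T],[Y]\rangle=-\dim\Ext^1(T,Y)\leq 0$, so both pairings vanish; since $Coh(\mathbb{X})$ has global dimension one and $T$ generates $\mathcal{D}^b(Coh(\mathbb{X}))$, this forces $X=Y=0$. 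Consequently, for $\ga\in K_0^+(\cT)$ with $\mu(\ga)\geq n$, the truncated torsion-pair identity of Lemma~\ref{L:01} reads $\mathbf{1}_{\ga,\geq n}=\mathbf{1}^{\cT}_{\ga}+\sum_{\a+\b=\ga,\;\a,\b\neq 0}v^{\langle\a,\b\rangle}\mathbf{1}^{\cF}_{\a,\geq n}\mathbf{1}^{\cT}_{\b,\geq n}$: the only unknown in class $\ga$ is the element you want; every cross term involves classes strictly smaller than $\ga$ for the lexicographic order on (rank, degree) and is covered by the induction hypothesis for \emph{both} $\cT$ and $\cF$; and $\mathbf{1}_{\ga,\geq n}$ is a finite sum of characteristic functions of Harder--Narasimhan strata, hence spherical with an integral lift. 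Solving for $\mathbf{1}^{\cT}_{\ga}$ (resp.\ $\mathbf{1}^{\cF}_{\ga}$ when $\ga\in K_0^+(\cF)$) finishes the proof, with no analysis of semistable sheaves inside $\cT$, no recourse to the comultiplication or the nondegeneracy of the Green pairing, and with the independence on everything but the combinatorial type read off directly from the recursion. Incidentally, the same vanishing observation shows that even your own endpoint (sphericity of the sum $\mathbf{1}^{\cT}_{\ga}+\mathbf{1}^{\cF}_{\ga}$) would suffice, once the finiteness issue above is settled, since one of the two summands is zero.
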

\begin{proof} We will prove the statements concerning $\cT$ and $\cF$ simultaneously, by induction on the rank.
The most natural idea here would be to write an equality
\begin{equation}\label{E:eq01}
\m1_{\gamma} = \sum_{\a +\beta =\gamma} v^{ \langle \a, \beta\rangle} \m1^{\cF}_{\a} \m1^{\cT}_{\b}
\end{equation}
coming from the canonical filtration $\mathcal{G}^{\cT} \subset \mathcal{G}$ of a coherent sheaf $\mathcal{G} \in Coh(\mathbb{X})$. However, (\ref{E:eq01}) is only an equality in the completed Hall algebra and this leads to some
convergence issues. In order to avoid these difficulties, we fix an integer $n$ such that
\begin{equation}\label{E:eq00}
\forall\; \mathcal{G} \in \cT, \;\mu_{min}(\mathcal{G}) \geq n.
\end{equation}
and consider the full subcategory $Coh^{\geq n}$ of
$Coh(\mathbb{X})$ whose objects are coherent sheaves $\mathcal{G}$ satisfying the following property~: 
\begin{equation}\label{E:eq03}
n \leq \mu_{min}(\mathcal{G}).
\end{equation}
Let us set
$$\m1_{\gamma, \geq n}=\sum_{\substack{\mathcal{G} \in Coh^{\geq n} \\ \overline{\mathcal{G}}=\gamma}} 1_{\mathcal{G}}, \qquad
\m1^{\cT}_{\gamma, \geq n}=\sum_{\substack{\mathcal{G} \in Coh^{\geq n} \cap \cT \\ \overline{\mathcal{G}}=\gamma}} 1_{\mathcal{G}}, \qquad
\m1^{\cF}_{\gamma, \geq n}=\sum_{\substack{\mathcal{G} \in Coh^{\geq n}\cap \cF  \\ \overline{\mathcal{G}}=\gamma}} 1_{\mathcal{G}}.
$$
Observe that all the above sums are finite, i.e. $\m1_{\gamma, \geq n}, \m1^{\cT}_{\gamma, \geq n}, \m1^{\cF}_{\gamma,\geq n}$ all belong to $\mathbf{H}_{\mathbb{X}}$. Also, $\m1_{\gamma, \geq n}=0$ unless
$\mu(\gamma) \in \geq n$. The following lemma is a truncated version of (\ref{E:eq01}).

\vspace{.1in}

\begin{lem}\label{L:01} For any $\gamma \in K_0^+$ satisfying $\mu(\gamma) \geq n$ we have
\begin{equation}\label{E:eq04}
\m1_{\gamma,\geq n} = \sum_{\substack{\a +\beta =\gamma}} v^{\langle \a, \beta\rangle} \m1^{\cF}_{\a,\geq n} \m1^{\cT}_{\b,\geq n}.
\end{equation}
Moreover, the number of contributing terms in the sum on the right hand side of (\ref{E:eq04}) is finite.
\end{lem}
\begin{proof} The fact that the number of pairs $(\a,\b)$ such that $\a+\b=\gamma$ and $\mu(\a), \mu(\beta) \geq n$ is finite is a consequence of the fact that $K_0^+ \cap \{\sigma\;|\; \mu(\sigma) \in \geq n\}$ is a strictly convex subcone of $K_0^+$. In order to prove (\ref{E:eq04}) it is enough to show that if $\mathcal{G}$ is any coherent sheaf then $\mathcal{G} \in Coh^{\geq n}$ if and only if in the canonical exact sequence
$$\xymatrix{ 0 \ar[r] & \mathcal{G}^{\cT} \ar[r] & \mathcal{G} \ar[r] & \mathcal{G}^{\cF} \ar[r] & 0}$$
we have $\mathcal{G}^{\cT} \in Coh^{\geq n} \cap \cT$ and $\mathcal{G}^{\cF} \in Coh^{\geq n} \cap \cF$. 
Assume first that $\mathcal{G} \in Coh^{\geq n}$. By (\ref{E:eq00}) it suffices to check that $\mu_{min}(\mathcal{G}^{\cF}) \geq n$. This follows from the inequality
$$\mu_{min}(\mathcal{G}^{\cF}) \geq \mu_{min}(\mathcal{G}) \geq n$$
which in turn follow from the fact $\mathcal{G}^{\cF}$ is a quotient sheaf of $\mathcal{G}$. The other implication is obvious.
\end{proof}

We now proceed with the proof of Proposition~\ref{T:1}. Assume first that $\gamma$ is of rank $0$. If ${T}$ is a vector bundle then any torsion sheaf belongs to $\mathcal{T}$ and we have $\mathbf{1}_{\gamma}=\m1^{\cT}_{\gamma}$ so that the statement of Proposition~\ref{T:1} for $\gamma$ follows from Theorem~\ref{T:Lin}. Suppose that $T$ is not a vector bundle, so that a priori we may have
$\mathbf{1}_{\gamma} \neq \m1^{\cT}_{\gamma}$ for some rank zero class $\gamma$. We argue by induction on the degree of $\gamma$.
Fix some $d >0$ and assume that the statement of Proposition~\ref{T:1} is proved for all $\gamma$ of rank zero and degree at most
$d-1$. If $\gamma$ belongs to $K_0^+(\cT)$ then by (\ref{E:eq01}),
$$\m1_{\ga}= \m1^{\cT}_{\gamma} + \sum_{\substack{\a +\beta =\gamma\\ \a, \b \neq 0}} v^{ \langle \a, \beta\rangle} \m1^{\cF}_{\a} \m1^{\cT}_{\b}$$
and similarly if $\gamma$ belongs to $K_0^+(\cF)$ then 
$$\m1_{\ga}= \m1^{\cF}_{\gamma} + \sum_{\substack{\a +\beta =\gamma\\ \a, \b \neq 0}} v^{ \langle \a, \beta\rangle} \m1^{\cF}_{\a} \m1^{\cT}_{\b}.$$
We now define accordingly
$${}_R\m1^{\cT}_{\gamma}:= {}_R\m1_{\gamma}-  \sum_{\substack{\a +\beta =\gamma\\ \a, \b \neq 0}} v^{ \langle \a, \beta\rangle} {}_R\m1^{\cF}_{\a} {}_R\m1^{\cT}_{\b}$$
or
$${}_R\m1^{\cF}_{\gamma}:= {}_R\m1_{\gamma}-  \sum_{\substack{\a +\beta =\gamma\\ \a, \b \neq 0}} v^{ \langle \a, \beta\rangle} {}_R\m1^{\cF}_{\a} {}_R\m1^{\cT}_{\b}.$$

We give a similar proof for the case of classes $\gamma$ of nonzero rank, taking care to truncate (\ref{E:eq01}) using Lemma~\ref{L:01}.
Let us fix $n$ verifying (\ref{E:eq00}). We will prove the statement of the proposition for the elements $\m1^{\cT}_{\ga, \geq n}, \m1^{\cF}_{\ga,\geq n}$ in place of $\m1^{\cT}_{\ga}, \m1^{\cF}_{\ga}$ respectively. Since, by Lemma~\ref{L:finitesum}, for any fixed $\ga$ we have
$\m1^{\cT}_{\ga, \geq n}=\m1^{\cT}_{\ga}$ and $\m1^{\cF}_{\ga, \geq n}=\m1^{\cF}_{\ga}$ for $n \ll 0$ the theorem will follow.
We will argue by induction on the rank. Let us fix some $r >0$ and let us assume that the statement of the theorem is true for all $\m1^{\cT}_{\sigma, \geq n}, \m1^{\cF}_{\sigma,\geq n}$ with $\sigma$ satisfying $rank(\sigma)<r$. Choose some $\gamma \in K_0^+(\cT), \mu(\gamma) \geq n$ and $rank(\gamma) =r$.
By Lemma~\ref{L:01} we have
$$\m1_{\ga, \geq n}= \m1^{\cT}_{\gamma} + \sum_{\substack{\a +\beta =\gamma\\ \a, \b \neq 0}} v^{ \langle \a, \beta\rangle} \m1^{\cF}_{\a,\geq n} \m1^{\cT}_{\b,\geq n}.$$
For any pair $(\a,\b)$ occuring in the above sum we have $rank(\a), rank(\b) < r$. Hence by the induction hypothesis the elements
$\m1^{\cF}_{\a,\geq n}$ and $\m1^{\cT}_{\b,\geq n}$ admit integral lifts ${}_R\m1^{\cF}_{\a,\geq n}$ and ${}_R\m1^{\cT}_{\b,\geq n}$ in ${}_R\mathbf{H}^{sph}_{\underline{p}}$. By \cite[Section. 3.3.]{Lin}, the characteristic functions
of all the Harder-Narasimhan strata in $\mathbf{Coh}_{\mathbb{X}}$ admit integral lifts. Since $\m1_{\gamma, \geq n}$ is a sum of such characteristic functions, it follows that there exists a lift ${}_R\m1_{\gamma, \geq n}$ 
of $\m1_{\gamma, \geq n}$. But then the element
$${}_R\m1^{\cT}_{\gamma, \geq n}:= {}_R\m1_{\gamma, \geq n}-  \sum_{\substack{\a +\beta =\gamma\\ \a, \b \neq 0}} v^{ \langle \a, \beta\rangle} {}_R\m1^{\cF}_{\a,\geq n} {}_R\m1^{\cT}_{\b,\geq n}$$
is a lift of $\m1^{\cT}_{\gamma, \geq n}$ to ${}_R\mathbf{H}^{sph}_{\underline{p}}$. The same argument works
for a class $\gamma \in K_0^+(\cF)$. Proposition~\ref{T:1} is proved.
\end{proof}

\vspace{.1in}

\noindent
\textbf{Remark.} From the above proof, we see that the integral lifts ${}_R\m1^{\cT}_{\gamma}, {}_R\m1^{\cF}_{\gamma}$ only depend on the combinatorial data $(\up, K_0^+(\mathcal{F}), K_0^+(\mathcal{T}))$.

\vspace{.1in}

\section{Kac polynomials for weighted projective lines}

\vspace{.1in}

\paragraph{\textbf{5.1.}}
We will say that a coherent sheaf $\mathcal{F} \in Coh(\mathbb{X})$ is \textit{geometrically indecomposable} if it is indecomposable and if $\mathcal{F} \otimes \overline{k}$ is also indecomposable. The number of geometrically indecomposable coherent sheaves (counted up to isomorphism) of a given class
$\alpha \in K_0^+(Coh(\mathbb{X}))$ is finite. We will denote it by $A_{\alpha}(\mathbb{X})$. The following Theorem
is proved in \cite[Thm. 7.1]{SHiggs}.

\vspace{.1in}

\begin{theo} For any $\alpha \in K^+_0(Coh(\mathbb{X}))$ and any tuple $\underline{p}=(p_1, \ldots, p_N)$ there is a polynomial
$A_{\underline{p}, \alpha} \in \mathbb{Q}[x]$ such that for any finite field $k$ and any weighted projective line 
$\mathbb{X}=\mathbb{X}_{\underline{\lambda},\underline{p}}$ defined over $k$ we have
$$A_{\alpha}(\mathbb{X})=A_{\underline{p}, \alpha}(|k|).$$
\end{theo}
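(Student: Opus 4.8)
The statement to prove is Theorem from \cite[Thm. 7.1]{SHiggs}: the count $A_\alpha(\mathbb{X})$ of geometrically indecomposable coherent sheaves of class $\alpha$ is polynomial in $|k|$, depending only on $\underline{p}$. Since this is quoted as a known theorem, my plan is to reconstruct its proof by the standard Hall-algebra/integration-map strategy for Kac-type polynomiality results. The idea is to pass from the count of \emph{indecomposable} objects to the count of \emph{all} objects via an exponential/logarithm relation in a suitable ring, realize all the relevant generating series as images under $ev_k$ of universal elements in the integral form ${}_R\mathbf{H}^{sph}_{\underline{p}}$ of Theorem~\ref{T:Lin}, and then extract polynomiality from the fact that these universal elements live over $R=\C[t,t^{-1}]$ (or its localization $R_{loc}$), combined with a rationality-implies-polynomiality argument.

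\emph{Step 1: reduce ``geometrically indecomposable'' to ``absolutely indecomposable'' over $\overline{k}$ by Galois descent.} As in Kac's original argument, a sheaf $\mathcal{F}$ over $k$ of class $\alpha$ with $\mathcal{F}\otimes\overline{k}$ indecomposable corresponds, after base change, to an $\mathrm{Aut}(\overline{k}/k)$-orbit of absolutely indecomposable sheaves over $\overline{k}$; counting $k$-forms is governed by a twisted version of the same count over finite extensions $l/k$, and a Möbius-type inversion over the extension degrees shows that polynomiality of the absolutely indecomposable count over all $l$ forces polynomiality of $A_\alpha(\mathbb{X})$ itself. So it suffices to produce, for each $\alpha$, a polynomial $I_{\underline{p},\alpha}\in\mathbb{Q}[x]$ counting absolutely indecomposable sheaves of class $\alpha$ over any $\mathbb{X}_{\underline{\lambda},\underline{p}}/l$.

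\emph{Step 2: express the indecomposable count via Hall-algebra integration.} Following Hua's and Schiffmann's approach, one introduces the volume $v_\alpha(\mathbb{X})=\sum_{\overline{\mathcal{G}}=\alpha}|\mathrm{Aut}(\mathcal{G})|^{-1}$ of the stack $\mathbf{Coh}_\alpha$, i.e. the image of $\m1_\alpha$ under the integration map $\int: \mathbf{H}_{\mathbb{X}}\to\Q$, $f\mapsto(f|\m1_0)$ suitably interpreted. Because every sheaf is a direct sum of indecomposables in a unique way, the generating series $\sum_\alpha v_\alpha z^\alpha$ factors as a (plethystic) exponential of the series $\sum_\alpha i_\alpha z^\alpha$ built from the \emph{indecomposable} volumes $i_\alpha$, and $i_\alpha$ in turn determines the naive count of absolutely indecomposable objects after a further standard $\mathrm{Exp}/\mathrm{Log}$ manipulation in the variable $q$ (this is exactly the step where the $R_{loc}$-denominators $(q^l-1)^{-1}$ enter). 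The key input making the right-hand side computable is that $\m1_\alpha$, via the Harder--Narasimhan recursion of \cite[Sect. 3.3]{Lin} together with Proposition~\ref{T:1} (or just \cite[Thm. 2.11]{Lin}), lies in the spherical subalgebra and admits a \emph{universal} integral lift ${}_R\m1_\alpha\in{}_R\mathbf{H}^{sph}_{\underline{p}}$; applying the universal pairing $(\,|\,)$ with the universal lift of $\m1_0$ yields a single element of $R_{loc}$ that specializes to $v_\alpha(\mathbb{X})$ for every $k$.

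\emph{Step 3: from $R_{loc}$-universality to polynomiality.} At this point $v_\alpha(\mathbb{X})$ is the evaluation at $t=|k|^{-1/2}$ of a fixed element of $R_{loc}=\C[t,t^{-1},(t^l-1)^{-1}]$, depending only on $\underline{p}$. Inverting the $\mathrm{Exp}/\mathrm{Log}$ of Step~2 over $R_{loc}$ keeps us in $R_{loc}$ (the combinatorial Möbius inversions only involve integers), so the absolutely indecomposable count $I_\alpha(\mathbb{X})$ is also the evaluation of a universal element $f_{\underline{p},\alpha}\in R_{loc}$. But $I_\alpha(\mathbb{X})$ is a nonnegative integer for every finite field; a Laurent polynomial in $q^{1/2}$ with rational-function denominators of the form $\prod(q^l-1)$ that takes integer values at infinitely many prime powers $q$ must in fact be a polynomial in $q$ (the denominators must cancel, and half-integer powers of $q$ cannot survive). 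Combining with Step~1 gives the desired $A_{\underline{p},\alpha}\in\Q[x]$ with $A_\alpha(\mathbb{X})=A_{\underline{p},\alpha}(|k|)$.

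\emph{Main obstacle.} The genuinely delicate point is Step~2: controlling the convergence of the HN-recursion and of the plethystic $\mathrm{Exp}/\mathrm{Log}$ in the \emph{completed} Hall algebra $\widehat{\mathbf{H}}_{\mathbb{X}}$ for classes of nonzero rank, where $\m1_\alpha$ itself is only a formal object. This is precisely the technical heart where one must work degree-by-degree in a fixed slope cone (exactly the truncation device $\mathbf{1}_{\gamma,\geq n}$ used in Proposition~\ref{T:1}) to make every manipulation a finite one, and then check that the answer stabilizes. Everything else — the Galois descent of Step~1 and the integrality argument of Step~3 — is formal once Step~2 is in place.
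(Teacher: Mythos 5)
The paper does not actually prove this statement: it is quoted from \cite[Thm.~7.1]{SHiggs}, and the method of that proof is reproduced in the paper in the proof of Theorem~\ref{T:2}. Measured against that method, your proposal has a genuine gap at its core, in Step~2. You assert that Krull--Schmidt uniqueness forces the generating series of the plain stack volumes $v_\alpha=\sum_{\overline{\mathcal{G}}=\alpha}|\mathrm{Aut}(\mathcal{G})|^{-1}$ to be a plethystic exponential of ``indecomposable volumes''. That does not follow: for non-isomorphic indecomposables $M,N$ one has $|\mathrm{Aut}(M\oplus N)|=|\mathrm{Aut}(M)|\,|\mathrm{Aut}(N)|\,q^{\dim\Hom(M,N)+\dim\Hom(N,M)}$, and these Hom-dimensions are \emph{not} determined by the classes (only the Euler form $\hom-\mathrm{ext}$ is), so the coefficient of $z^\alpha$ on the left is not a universal expression in class data. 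Already for the category of $\mathbb{F}_q$-vector spaces the coefficient of $z^d$ in the plethystic exponential of $z/(q-1)$ is $q^{d^2-d}/|GL_d|$, not $v_d=1/|GL_d|$; the discrepancy $q^{d^2-d}$ is exactly the number of nilpotent $d\times d$ matrices. This is precisely why the actual proof works with the stack of pairs $(\mathcal{F},\theta)$ with $\theta$ a nilpotent endomorphism: the ratio $|\mathrm{NilEnd}(\mathcal{F})|/|\mathrm{Aut}(\mathcal{F})|$ is multiplicative over the Krull--Schmidt decomposition and depends only on discrete data, which gives the clean identity of type (\ref{E:eq1}) relating these volumes to $A_\alpha(\mathbb{X}\otimes\mathbb{F}_{q^l})$ for all $l$. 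The second ingredient you omit is the computation of those nilpotent-pair volumes: one stratifies by the Jordan type of $\theta$, shows each stratum is a stack vector bundle over a stack of chains of surjections, and identifies the latter's volume with a Green pairing of a product $\m1_{\a_s}\cdots\m1_{\a_1}$ against a suitable characteristic function; only at that point do Lin's integral lifts (Theorem~\ref{T:Lin}, and Proposition~\ref{T:1} in the relative setting) give a universal element of $R_{loc}$. Your proposal jumps from ``everything lies in the spherical algebra'' to universality of the relevant volumes without this bridge, and your truncation device, while in the spirit of the paper, does not repair the missing multiplicativity.

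Two smaller points. Step~1 is unnecessary: ``geometrically indecomposable'' here already means absolutely indecomposable, and the exponential identity involves the counts over all extensions $\mathbb{F}_{q^l}$ directly, so no Galois/M\"obius descent is required. Also, the pairing $f\mapsto(f\,|\,\m1_0)$ evaluates $f$ at the zero object rather than integrating it; the volume of (a truncation of) $\mathbf{Coh}_\alpha$ is $(\m1_\alpha\,|\,\m1_\alpha)$, and it is this kind of pairing that the paper ultimately feeds into the integral form. Your Step~3 (rationality plus integer values at infinitely many prime powers gives a polynomial in $q$) is consistent with the paper's concluding argument, but it only becomes available once the identity missing in Step~2 is in place.
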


\vspace{.1in}

\paragraph{\textbf{5.2.}} In this section, we prove some variants of the above result. Namely, denote by $A_{\alpha}^{\mathcal{T}}(\mathbb{X})$ and
$A_{\alpha}^{\mathcal{F}}(\mathbb{X})$ the number of geometrically indecomposable sheaves in the subcategory $\mathcal{T}$ (resp. $\mathcal{F}$) of class $\alpha$.

\begin{theo}\label{T:2} For any $\alpha \in K^+_0(Coh(\mathbb{X}))$ and any tuple $\underline{p}=(p_1, \ldots, p_N)$ there are  polynomials
$A^{\mathcal{T}}_{\underline{p}, \alpha},A^{\mathcal{F}}_{\underline{p}, \alpha}  \in \mathbb{Q}[x]$ such that for any finite field $k$ and any weighted projective line 
$\mathbb{X}=\mathbb{X}_{\underline{\lambda},\underline{p}}$ defined over $k$ we have
$$A^{\mathcal{T}}_{\alpha}(\mathbb{X})=A^{\mathcal{T}}_{\underline{p}, \alpha}(|k|), \qquad A^{\mathcal{F}}_{\alpha}(\mathbb{X})=A^{\mathcal{F}}_{\underline{p}, \alpha}(|k|).$$
\end{theo}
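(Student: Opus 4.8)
The plan is to reduce Theorem~\ref{T:2} to the known polynomiality of the Kac polynomials $A_{\underline{p},\alpha}$ for $Coh(\mathbb{X})$ (stated just above) by comparing the counting generating function for indecomposables in $\mathcal{T}$ (resp. $\mathcal{F}$) to that for all of $Coh(\mathbb{X})$, using the integrality results of Section~4. The standard device is Hua's/Kac's formula: the generating series of numbers of absolutely indecomposables is extracted, via a plethystic (Exp/Log) manipulation, from the generating series counting \emph{all} objects weighted by $1/|Aut|$, i.e. from the collection of elements $\m1_\gamma$ (equivalently the volumes of the stacks $\mathbf{Coh}_\gamma$). First I would recall that $\sum_\gamma \frac{|\{\mathcal{G}\in\mathcal{C},\,\overline{\mathcal{G}}=\gamma\}_{\text{indec}}|}{\text{(suitable }q\text{-factor)}}$ is obtained from $\log$ of $\sum_\gamma \frac{\m1_\gamma^{\mathcal{C}} }{\text{(factor)}}$ in an appropriate completion, for $\mathcal{C}=\mathcal{T}$ or $\mathcal{F}$; this works verbatim because by Lemma~\ref{L:finitesum} there are only finitely many sheaves in $\mathcal{T}$ (resp. $\mathcal{F}$) of any fixed class, so the pairing $(\m1^{\mathcal{C}}_\gamma \mid \m1^{\mathcal{C}}_\gamma) = \sum_{\mathcal{G}\in\mathcal{C},\,\overline{\mathcal{G}}=\gamma} 1/|Aut(\mathcal{G})|$ is a genuine rational number, and similarly the ``non-commutative'' Hall-algebra bookkeeping that separates indecomposables from decomposables is available because $\mathcal{T}$ and $\mathcal{F}[1]$ are extension-closed (Theorem~\ref{T: indecan}(0)).

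Next I would invoke Proposition~\ref{T:1}: each $\m1^{\cT}_\gamma$ and $\m1^{\cF}_\gamma$ lifts to an element ${}_R\m1^{\cT}_\gamma,\ {}_R\m1^{\cF}_\gamma$ of the $R$-form ${}_R\mathbf{H}^{sph}_{\up}$, with $R=\C[t,t^{-1}]$, and these lifts depend only on $\up$ and the combinatorial type, not on $k$ or $\mathbb{X}$. Since the Green pairing on ${}_R\mathbf{H}^{sph}_{\up}$ takes values in $R_{loc}=R[(t^l-1)^{-1}]$ and specializes under $ev_k$ to the honest Green pairing with $t\mapsto |k|^{-1/2}$, the quantities $(\m1^{\mathcal{C}}_\gamma \mid \m1^{\mathcal{C}}_\beta)$, and more generally all the iterated products and pairings that enter the $\log$-expansion, are values at $t=|k|^{-1/2}$ of fixed elements of $R_{loc}$. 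Applying the (finite, by the strict convexity / finiteness statements in Lemmas~\ref{L:01} and \ref{L:finitesum}) plethystic inversion to extract the count of \emph{geometrically} indecomposables of class $\alpha$ — here I would use exactly the same arithmetic-geometry input as in \cite[Thm.~7.1]{SHiggs} relating absolutely indecomposable counts over all finite extensions to a single geometric count, via Galois descent over $\overline{\mathbb{F}_q}$ — yields that $A^{\mathcal{T}}_\alpha(\mathbb{X})$ and $A^{\mathcal{F}}_\alpha(\mathbb{X})$ are obtained by substituting $x=|k|$ into a fixed element of $R_{loc}$ evaluated at $t=x^{-1/2}$. Finiteness of $A^{\mathcal{T}}_\alpha(\mathbb{X})$ and $A^{\mathcal{F}}_\alpha(\mathbb{X})$ (which we need for the statement to make sense) is itself immediate from Lemma~\ref{L:finitesum}.

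The remaining, and main, point is to upgrade ``value of a fixed rational function in $x$ with possible poles at roots of unity and at half-integer powers of $x$'' to ``value of a genuine polynomial in $\mathbb{Q}[x]$''. For this I would argue exactly as one does for the weighted projective line itself: (a) the half-integer powers of $q$ cancel because all the relevant Euler-form exponents $\langle\alpha,\beta\rangle$ that actually occur are controlled and the final count is manifestly a rational number, so only integer powers of $q$ survive; (b) the possible denominators $(q^l-1)$ cancel because the count $A^{\mathcal{T}}_\alpha(\mathbb{X})$ (resp. $A^{\mathcal{F}}_\alpha$) is a non-negative integer for \emph{every} prime power $q$, and a rational function $P(x)/\prod(x^{l}-1)$ that takes integer values at all prime powers must in fact be a polynomial (an element of $R_{loc}$ regular at every prime power, hence regular everywhere, is a Laurent polynomial; positivity of rank/degree data then rules out negative powers of $x$). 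I expect step (b) — the denominator-clearing — to be the real obstacle, since it requires knowing a priori that the \emph{truncated} sums $\m1^{\mathcal{C}}_{\gamma,\geq n}$ stabilize to $\m1^{\mathcal{C}}_\gamma$ uniformly in $k$ (which is Lemma~\ref{L:finitesum}, but one must check the stabilization threshold $n$ can be chosen independently of $k$, depending only on $\up$ and the combinatorial type — this follows by tracing Lemmas~\ref{L:last}, \ref{L:finitesum}), so that the ``fixed element of $R_{loc}$'' producing the count is really the same for all $k$. Once that uniformity is in hand, the passage from $R_{loc}$-valued to $\mathbb{Q}[x]$-valued is the usual interpolation argument, and the final assertion that the polynomials depend only on $\up$ and the combinatorial type is read off from the Remark after Proposition~\ref{T:1}.
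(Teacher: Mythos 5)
Your overall architecture (express the relevant counts through Hall-algebra quantities, invoke Proposition~\ref{T:1} to lift them to the integral form ${}_R\mathbf{H}^{sph}_{\up}$ independently of $k$, specialize via $ev_k$, and finally upgrade a rational function taking integer values at all prime powers to a polynomial in $\Q[x]$) matches the paper's endgame, but your first and decisive step is not correct. There is no direct Exp/Log identity relating $\sum_\gamma vol(\mathbf{Coh}^{\cT}_\gamma)z^\gamma$, i.e.\ the diagonal pairings $(\mathbf{1}^{\cT}_\gamma\,|\,\mathbf{1}^{\cT}_\gamma)=\sum_{\mathcal{G}}1/|Aut(\mathcal{G})|$, to the generating series of (absolutely) indecomposable objects: for a decomposable object, $|Aut(\mathcal{G}_1\oplus\mathcal{G}_2)|$ is not $|Aut(\mathcal{G}_1)|\cdot|Aut(\mathcal{G}_2)|$, because the radical of $End(\mathcal{G}_1\oplus\mathcal{G}_2)$ contributes the cross factors $|Hom(\mathcal{G}_1,\mathcal{G}_2)|\cdot|Hom(\mathcal{G}_2,\mathcal{G}_1)|$ (nonzero already for direct sums of line bundles lying in $\cT$). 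These cross terms prevent the weighted count of all objects from being the plethystic exponential of the indecomposable series; Krull--Schmidt, extension-closedness of $\cT$ and the finiteness of Lemma~\ref{L:finitesum} do not repair this (for the Jordan quiver the analogous naive identity already fails in dimension $2$). So the ``plethystic inversion'' you propose has no valid input: the stack volumes alone --- which are exactly what Section~6 of the paper computes --- are too coarse to determine $A^{\cT}_\a(\mathbb{X})$.

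What is missing is precisely the technical core of the paper's argument, imported from \cite{SHiggs}: one replaces $\mathbf{Coh}^{\cT}_{\a}$ by the stack $\mathbf{Nil}^{\cT}_{\a}$ of pairs $(\mathcal{F},\theta)$ with $\theta$ a nilpotent endomorphism. For this stack the exponential formula (\ref{E:eq1}) \emph{does} hold, because the number of nilpotent elements of $End(\mathcal{F})$ cancels the radical/cross-Hom contribution to $|Aut(\mathcal{F})|$. One then stratifies $\mathbf{Nil}^{\cT}_{\a}$ by the Jordan type of $\theta$, shows each stratum is a stack vector bundle of explicit rank over the stack of chains of surjections in $\cT$ (here the stability of $\cT$ under quotients is used), and identifies the volume of the base with the Green pairing $\langle \mathbf{1}_{\a_s}\cdots\mathbf{1}_{\a_1}\,|\,\mathbf{1}^{\cT}_\gamma\rangle$; only at this point do Proposition~\ref{T:1} and Theorem~\ref{T:Lin} produce a rational function in $t$, uniform in $k$, and then your final steps (taking $\log$, extracting $A^{\cT}_{\a}$, and the integrality-at-prime-powers argument) coincide with the paper's conclusion. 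Your concern about the uniformity in $k$ of the truncation threshold $n$ is a side issue that is already absorbed into the proof of Proposition~\ref{T:1} and the Remark following it.
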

\begin{proof} The proof essentially follows the line of \cite[Thm. 1.1]{SHiggs}. We sketch the argument here and refer to \textit{loc. cit} for the details.
The idea is to relate, through some generating series, the number of absolutely indecomposable sheaves in $\mathcal{T}$  (of class $\alpha$, for all classes $\alpha$) to the volume of stacks parametrizing sheaves in $\mathcal{T}$ (of class $\alpha$, for all $\alpha$) equipped with a nilpotent endomorphism. We then express the volumes of these stacks in terms of the values of 
some pairings in the integral form ${}_R\mathbf{H}^{sph}_{\up}$ of the spherical Hall algebra $\mathbf{H}^{sph}_{\mathbb{X}}$. This yields the polynomiality of these volumes --and thus of the numbers $A_{\a}(\mathbb{X})$ and provides, in principle, a way to explicitly compute them.

\vspace{.1in}

We will deal with the category $\mathcal{T}$, the case of $\mathcal{F}$ being similar. Let us denote by $\mathbf{Coh}^{\cT}$ the stack parametrizing
objects of $\cT$ of class $\a$. This is an open substack of the (smooth) stack $\mathbf{Coh}_{\a}$ parametrizing objects
of $Coh(\mathbb{X})$ of class $\a$. Note that, unlike $\mathbf{Coh}_{\a}$, the stack $\mathbf{Coh}^{\cT}_{\a}$ is of finite type. Let $\mathbf{Nil}^{\cT}_{\alpha}$ denote the stack parametrizing pairs $(\mathcal{F}, \theta)$ with
$\mathcal{F} \in \mathbf{Coh}^{\cT}_{\a}$ and $\theta \in End(\mathcal{F})$ a nilpotent endomorphism. This is also 
a stack of finite type, but is in general singular. The following result essentially follows from the Krull-Schmitt property of $\cT$ and is proved in exactly the same way as \cite[Prop. 2.2]{SHiggs}~:

\vspace{.1in}

\begin{prop} The following relation holds in the completed group ring $\Q[[z^{\a}]]_{\a \in K^+_0}$~:
\begin{equation}\label{E:eq1}
\sum_{\alpha \in K_0^+} vol(\mathbf{Nil}^{\cT}_\a) z^{\a} = \text{exp}\left( \sum_{\substack{l \geq 1 \\ \a \in K_0^+}} \frac{1}{l(q^l-1)} A^{\cT}_{\a}(\mathbb{X} \otimes \mathbb{F}_{q^l}) z^{l\a}\right).
\end{equation}
\end{prop}

The computation of the volume of the stack $\mathbf{Nil}^{\cT}_{\a}$ proceeds along the following lines. We consider a stratification 
$$\mathbf{Nil}^{\cT}_{\a}= \bigsqcup_{\underline{\a}}\mathbf{Nil}^{\cT}_{\underline{\a}}$$
by the Jordan type of the nilpotent endomorphism $\theta$, where the index runs over all sequences $\underline{\a}=(\a_1, \ldots, \a_l)$ with $\a_i \in K_0^+, \sum_i i \a_i=\a$.  More precisely, to $(\mathcal{F},\theta) \in \mathbf{Nil}^{\cT}_\a$ we associate the sequence of surjections
\begin{equation}\label{E:seqsurj}
\mathcal{F}/ Im (\theta) \stackrel{d_1}{\tto} Im(\theta) / Im( \theta^2) \stackrel{d_2}{\tto} Im(\theta^2)/ Im(\theta^3)\cdots
\end{equation}
and we define the Jordan type $J(\mathcal{F}, \theta):=\underline{\a}=(\a_1, \a_2, \ldots)$ by
$$\alpha_i= \overline{Ker(d_i)}=\overline{Im(\theta^{i-1})/ Im(\theta^i)} - \overline{{Im}(\theta^i)/ Im(\theta^{i+1})}.$$
Note that $\sum_i i \alpha_i=\a$ (see \cite[Section~3.1]{SHiggs}). Given a tuple $\underline{\a}$ such that $\sum_i i \a_i=\a$ we set
$$\mathbf{Nil}^{\cT}_{\underline{\a}}=\{ (\mathcal{F},\theta) \in \mathbf{Nil}^{\cT}_{\a}\;|\; J(\mathcal{F},\theta)= \underline{\a}\}.$$

Let us now consider the stack $\widetilde{\mathbf{Coh}}^{\cT}_{\underline{\a}}$ parametrizing sequences of
surjective maps
\begin{equation}\label{E:seqsurj2}
\mathcal{G}_1 \stackrel{d_1}{\tto} \mathcal{G}_2 \stackrel{d_2}{\tto} \cdots
\end{equation}
of objects in $\cT$ such that $\overline{Ker(d_i)}=\a_i$. 
There is a canonical morphism of stacks $\pi_{\underline{\a}} : \mathbf{Nil}^{\cT}_{\underline{\a}} \to \widetilde{\mathbf{Coh}}^{\cT}_{\underline{\a}}$ which assigns to $(\mathcal{F},\theta)$ the sequence (\ref{E:seqsurj}). Observe that as $\cT$ is stable under quotients, all the sheaves $Im(\theta^i)$ and $\Im(\theta^{i})/Im(\theta^{i+1})$ indeed belong to $\cT$.

The proof of the following proposition is entirely analogous to that of \cite[Prop. 3.1.]{SHiggs}, see also \cite[Prop.~5.1]{MozSchiff}.

\vspace{.1in}

\begin{prop} The map $\pi_{\underline{\a}}$ is a stack vector bundle of rank 
$$r_{\underline{\a}}=-\bigg\{\sum_{i}(i-1) \langle \alpha_i, \alpha_i \rangle
+ \sum_{i<j} i (\alpha_i, \alpha_j) \bigg\} + \sum_{i<j} \langle \alpha_j, \alpha_i \rangle$$
In particular,
\begin{equation}\label{E:eq2}
vol(\mathbf{Nil}^{\cT}_{\underline{\a}})=q^{r(\underline{\a})} vol(\widetilde{\mathbf{Coh}}^{\cT}_{\underline{\a}}).
\end{equation}
\end{prop}

The key point here is that the volume of $\widetilde{\mathbf{Coh}}^{\cT}_{\underline{\a}}$ can be expressed
as a certain pairing in the \textit{spherical} Hall algebra $\mathbf{H}_{\mathbb{X}}^{sph}$. Indeed, recall the diagram
$$\xymatrix{ \mathbf{Coh}_{\a_s} \times \cdots \times \mathbf{Coh}_{\a_1} & \widetilde{\mathbf{Coh}}_{\underline{\a}} \ar[l]_-{q} \ar[r]^-{p} & \mathbf{Coh}_{\gamma}}
$$
occuring in the definition of the product in $\mathbf{H}_{\mathbb{X}}^{sph}$, where $\gamma=\sum_i \a_i$. Then $\widetilde{\mathbf{Coh}}^{\cT}_{\underline{\a}}= p^{-1} (\mathbf{Coh}^{\cT}_{\gamma})$ and
by definition of the Green pairing
\begin{equation}\label{E:eq3}
vol(\widetilde{\mathbf{Coh}}^{\cT}_{\underline{\a}})= \langle p_!(\mathbf{1}_{\widetilde{\mathbf{Coh}}^{\cT}_{\underline{\a}}}) \;|\; \mathbf{1}_{\gamma}\rangle =
\langle p_! (\mathbf{1}_{\widetilde{\mathbf{Coh}}_{\underline{\a}}}) \;|\; \mathbf{1}^{\cT}_{\gamma}\rangle
=q^{-\frac{1}{2}\sum_{i<j}\langle \a_i, \a_j\rangle}\langle \mathbf{1}_{\a_s} \cdots \mathbf{1}_{\a_1} \;|\;   \mathbf{1}^{\cT}_{\gamma}\rangle.
\end{equation}
Since $\mathbf{1}^{\cT}$ and the $\mathbf{1}_{\a_i}$ are, by Proposition~\ref{T:1} specializations of elements in the generic spherical Hall
algebra ${}_R\mathbf{H}^{sph}_{\underline{p}}$, it follows that there exists rational functions $f_{\underline{\a}} \in \Q(t)$ such that for any finite field $k$ and any weighted projective line $\mathbb{X}$ of type $\underline{p}$ defined over $k$,
$$vol(\mathbf{Nil}^{\cT}_{\underline{\a}})=ev_k (f_{\underline{\alpha}}).$$

Combining equations (\ref{E:eq1}), (\ref{E:eq2}) and (\ref{E:eq3}) we obtain the following relation, for a weighted projective line $\mathbb{X}$ defined over $\mathbb{F}_q$~:
\begin{equation}\label{E:eq4}
 \sum_{\substack{l \geq 1 \\ \a \in K_0^+}} \frac{1}{l(q^l-1)} A^{\cT}_{\a}(\mathbb{X} \otimes \mathbb{F}_{q^l}) z^{l\a}
 =ev_{\mathbb{F}_q} log \left( \sum_{\underline{a}} f_{\underline{\a}}(t)z^{\sum_i i \a_i}\right)
 \end{equation}  
 We deduce as wanted that $A_{\a}^{\cT}(\mathbb{X})$ is the evaluation at $t=v$ of some rational function $A^{\cT}_{\underline{p},\a}(t)$, and that in fact
 $$\sum_{\a} A^{\cT}_{\underline{p},\a}(t)z^\a=(t-1) \text{Log}\left(\sum_{\underline{\a}} f_{\underline{\a}}(t)z^{\sum_i i \a_i} \right).$$
 To finish, note that the rational function $A^{\cT}_{\underline{p},\a}(t)$ is necessarily in $\Q[t^2]$ as it takes integral values when evaluated at all prime powers $t=q^{l/2}$. This proves theorem~\ref{T:2} for the category $\cT$.
\end{proof}

\vspace{.1in}

In view of what is known for usual Kac polynomials, the following conjecture seems natural :

\begin{conj} For any $\up$ and any $\a$ we have $A^{\cT}_{\up,\a}(x), A_{\up,\a}^{\cF} \in \N[x]$.
\end{conj}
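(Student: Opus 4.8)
The plan is to realize $A^{\cT}_{\up,\a}(x)$ (and likewise $A^{\cF}_{\up,\a}(x)$) as the Poincaré polynomial of a genuine graded vector space, so that non-negativity of its coefficients becomes manifest. The starting point is that the generating-series identity (\ref{E:eq1}) in the proof of Theorem~\ref{T:2} is a plethystic exponential; inverting it yields $\sum_{\a}A^{\cT}_{\up,\a}(t)z^{\a}=(t-1)\mathrm{Log}\big(\sum_{\a}vol(\mathbf{Nil}^{\cT}_{\a})z^{\a}\big)$, so the entire question reduces to showing that this plethystic logarithm, weighted by $(t-1)$, has non-negative coefficients. I would establish this through the \emph{cohomological integrality} paradigm of Davison--Meinhardt, which is exactly the tool designed to promote such a numerical $\mathrm{Log}$ to a statement about an honest cohomology.

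First I would recognize the stack $\mathbf{Nil}^{\cT}_{\a}$ of pairs $(\mathcal{F},\theta)$, with $\mathcal{F}\in\cT$ of class $\a$ and $\theta$ a nilpotent endomorphism, as the $\cT$-supported, nilpotent part of the stack of objects of the $2$-Calabi--Yau category obtained from $Coh(\mathbb{X})$ by adjoining a Higgs field twisted by the Serre auto-equivalence $\mathcal{G}\mapsto\mathcal{G}(\vec{w})$; concretely this is the analogue, for Higgs sheaves on $\mathbb{X}$, of the global nilpotent cone. Its compactly supported cohomology is then governed by the cohomological integrality theorem: $H^{*}_{c}(\mathbf{Nil}^{\cT})$ is freely generated, as a symmetric algebra, by a BPS cohomology $\bigoplus_{\a}\mathrm{BPS}^{\cT}_{\a}$. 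Passing to $\F_q$-point counts turns this free-generation statement into precisely the $\Exp$/$\mathrm{Log}$ relation above and identifies $A^{\cT}_{\up,\a}(t)$ with the weight polynomial of $\mathrm{BPS}^{\cT}_{\a}$.

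To conclude that this weight polynomial has non-negative integer coefficients I then need $\mathrm{BPS}^{\cT}_{\a}$ to be \emph{pure} of Tate type, so that its weight polynomial coincides with its honest Poincaré polynomial. The case of $\cF$ runs identically, using the stack $\mathbf{Nil}^{\cF}_{\a}$ and the analogous generating-series identity established in the proof of Theorem~\ref{T:2}; alternatively the Serre auto-equivalence $\mathcal{G}\mapsto\mathcal{G}(\vec{w})$ can be exploited to transport the purity input between the classes $\cT$ and $\cF$.

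The genuinely hard step is purity. In the curve / Higgs-bundle setting this is a deep input, coming either from properness of the Hitchin map (as in the argument of \cite{SHiggs}) or from Davison's general purity results for $2$-Calabi--Yau categories. Two difficulties are specific to the present situation. First, $\cT$ is only a torsion class in $Coh(\mathbb{X})$, not the heart of the derived-equivalent abelian category, so I must check that the open substack $\mathbf{Coh}^{\cT}\subset\mathbf{Coh}$ and its nilpotent cotangent cut out a sub-object of the cohomological Hall algebra to which cohomological integrality applies; this should follow from $\cT$ being closed under extensions and quotients (Theorem~\ref{T: indecan}), which makes $\mathbf{Nil}^{\cT}$ stable under the relevant Hall operations. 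Second, the algebra $C_T$ has global dimension two, so one cannot argue directly on the module side; all cohomological work must instead be carried out on $Coh(\mathbb{X})$, where the global dimension is one, using the derived equivalence only to transport the combinatorial bookkeeping of $K_0^+(\cT)$ and $K_0^+(\cF)$. I expect purity to be the main obstacle, the plethystic and combinatorial parts being formal once purity is in hand.
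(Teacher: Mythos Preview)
The statement you are addressing is stated in the paper as a \emph{Conjecture}, not a theorem; the paper offers no proof and explicitly presents it as an open problem motivated by the analogy with Kac's positivity for quivers. There is therefore no paper proof to compare your proposal against.

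As a research strategy your outline is reasonable and in line with how such positivity statements have been attacked in neighbouring settings, but it is a programme rather than a proof, and you yourself identify the gaps correctly. Two points deserve emphasis. First, the stack $\mathbf{Nil}^{\cT}_{\a}$ in the paper parametrizes pairs $(\mathcal{F},\theta)$ with $\theta$ a nilpotent \emph{endomorphism} of $\mathcal{F}$, not a Higgs field $\mathcal{F}\to\mathcal{F}(\vec{w})$; these are different stacks, and the passage from one to the other (which in the curve case underlies \cite{SHiggs}) is itself nontrivial and would need to be made precise before cohomological integrality for a $2$-Calabi--Yau category can be invoked. Second, the Davison--Meinhardt machinery is established for the full moduli of objects in a $2$-CY category, not for an open substack cut out by a torsion class; your observation that $\cT$ is closed under extensions and quotients is relevant, but it does not by itself give you a CoHA sub-object to which integrality applies, and this step would require genuine work.

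The purity input you flag as ``the main obstacle'' is indeed the crux: even granting the structural reductions above, nothing in your proposal supplies it, and it is not known in this generality. In summary, you have sketched a plausible line of attack on an open conjecture, but the proposal does not constitute a proof, and the paper makes no claim to one.
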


\vspace{.2in}

\paragraph{\textbf{5.3.}} \textit{Proof of Theorem~\ref{T:main2}.} By Theorem~\ref{T: indecan} i), we have $A_{\mathbf{d},\up}=A^{\cT}_{\psi(\mathbf{d}),\up}$ if
$\psi(\mathbf{d}) \in K_0^+(\cT)$ while  $A_{\mathbf{d},\up}=A^{\cF}_{\psi(\mathbf{d}),\up}$ if
$\psi(\mathbf{d}) \in -K_0^+(\cF)$. Note that by Lemma~\ref{L:intersect}, the two cases are mutually exclusive.
 By Theorem~\ref{T:2}, these polynomials belong to
$\Q[x]$. It only remains to prove that they belong to $\Z[x]$. For this, we will use the following result due to Katz, see \cite[Sect. 6]{HRV}~:

\vspace{.05in}

\begin{lem}[Katz]\label{L:Katz} Let $Z/\mathbb{F}_q$ be a constructible set defined over $\mathbb{F}_q$. Assume that there exists a polynomial $P_Z(x) \in \mathbb{C}[x]$ such that for any
field extension $\mathbb{F}_{q^l} / \mathbb{F}_q$, $|Z(\mathbb{F}_{q^l})|=P_Z(q^l)$. Then $P_Z(x) \in \Z[x]$.  
\end{lem}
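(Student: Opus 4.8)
The plan is to compare two different descriptions of the integer sequence $l \mapsto |Z(\mathbb{F}_{q^l})|$ as finite sums of $l$-th powers: the one forced by the hypothesis, and the one coming from the zeta function of $Z$. Write $P_Z(x) = \sum_{j=0}^{d} c_j x^j$ with $c_j \in \mathbb{C}$ and $d = \deg P_Z$. Since $q \geq 2$, the bases $1, q, q^2, \ldots, q^d$ are pairwise distinct and nonzero, and the hypothesis reads $|Z(\mathbb{F}_{q^l})| = \sum_{j=0}^{d} c_j (q^j)^l$ for every $l \geq 1$. The goal is to show each $c_j \in \mathbb{Z}$, and I would do this by producing a second representation of the same sequence with manifestly integer coefficients and then invoking the linear independence of geometric progressions to identify the two.

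For the second representation I would use the zeta function
$$\zeta_Z(t) := \exp\!\left(\sum_{l \geq 1} |Z(\mathbb{F}_{q^l})|\,\frac{t^l}{l}\right) \in 1 + t\,\mathbb{C}[[t]].$$
Writing the constructible set $Z$ as a finite disjoint union of locally closed subschemes and using the multiplicativity of $\zeta$ under such decompositions, the Grothendieck--Dwork rationality theorem gives $\zeta_Z(t) \in \mathbb{Q}(t)$ (equivalently, one may invoke the Grothendieck--Lefschetz trace formula directly). Since $\zeta_Z(0) = 1$, I can factor this rational function as $\zeta_Z(t) = \prod_m (1 - \mu_m t)^{-e_m}$ over finitely many distinct nonzero complex numbers $\mu_m$, where $e_m \in \mathbb{Z}$ is the order of the corresponding zero or pole. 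Taking logarithms and comparing coefficients of $t^l$ then yields $|Z(\mathbb{F}_{q^l})| = \sum_m e_m \mu_m^l$ for all $l \geq 1$, with integer coefficients $e_m$.

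It remains to match the two expressions. For all $l \geq 1$ we have $\sum_{j} c_j (q^j)^l = \sum_m e_m \mu_m^l$. Collecting the finitely many distinct bases appearing on either side into a single set and applying a Vandermonde argument (the functions $l \mapsto \beta^l$ attached to distinct nonzero $\beta$ are linearly independent), the coefficient attached to each base must agree. In particular, for each $j$ we obtain $c_j = \sum_{m \,:\, \mu_m = q^j} e_m$, a finite sum of integers, whence $c_j \in \mathbb{Z}$ and $P_Z \in \mathbb{Z}[x]$.

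The genuinely nontrivial ingredient here is the rationality of $\zeta_Z(t)$: it is exactly what upgrades the a priori complex coefficients into a description $\prod_m (1 - \mu_m t)^{-e_m}$ with integer exponents, since orders of zeros and poles of a rational function are automatically integers. Everything else --- the disjoint decomposition of a constructible set, the logarithmic expansion, and the Vandermonde separation of distinct geometric progressions --- is elementary and routine. I would therefore treat Grothendieck's rationality theorem as the black box carrying all of the arithmetic content, and note that one could equally route through the Lefschetz trace formula together with the $\ell$-independence and integrality of the characteristic polynomials of Frobenius on compactly supported cohomology, at the cost of a slightly less clean integrality step.
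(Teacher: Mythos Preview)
Your argument is correct. The paper does not supply its own proof of this lemma; it is stated as a result due to Katz and referenced to \cite[Sect.~6]{HRV}, where Katz's appendix gives essentially the argument you describe: rationality of the zeta function (via Dwork or the Grothendieck--Lefschetz trace formula) yields a finite expression $|Z(\mathbb{F}_{q^l})|=\sum_m e_m\mu_m^l$ with integer $e_m$, and linear independence of distinct exponentials forces the polynomial coefficients to match these integers. So you have reconstructed the standard proof, and there is nothing further to compare.

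One very minor stylistic point: since you assumed the $\mu_m$ are pairwise distinct, the sum $\sum_{m:\mu_m=q^j} e_m$ has at most one term, so you could simply say $c_j$ equals some $e_m$ or $0$; but what you wrote is of course also correct.
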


\vspace{.05in}

In order to use the above result, we need to relate the polynomials $A_{\bd, C}$ to the point count of some constructible sets. Fix $\bd$ and set 
$$\text{Aut}_{\bd,C}=\{(M,g)\;|\; g \in Aut_{Rep_C}(M)\} \subset \text{Rep}_{\bd,C} \times GL_{\bd},$$
$$\text{Aut}^{abs. ind.}_{\bd,C}=\{(M,g)\;|\; M\; \text{is\;abs.\;indec.}\} \subset \text{Aut}_{\bd,C}.$$
Then $\text{Aut}_{\bd,C}$ is an algebraic variety defined over $\mathbb{F}_q$ while $\text{Aut}^{abs. ind.}_{\bd,C}$ is a constructible subset of $\text{Aut}_{\bd,\C}$. Notice that $\text{Aut}^{abs. ind.}_{\bd,C}$ is compatible with field
extensions, i.e. 
$$\text{Aut}^{abs. ind.}_{\bd,C} (\mathbb{F}_{q^l})=\{(M, g)\in \text{Rep}_{\bd, C}(\mathbb{F}_{q^l}) \times GL_{\bd}(\mathbb{F}_{q^l})\;|\; M \;\text{is\;abs.\;indec}, g \in Aut(M)\}$$
(this property is not true if one replaces `absolutely indecomposable' by `indecomposable').
Hence, for any $l \geq 1$, we have
$$\frac{|\text{Aut}^{abs. ind.}_{\bd,C}(\mathbb{F}_{q^l})|}{|GL_{\bd}(\mathbb{F}_{q^l})|}=|\{M \in Rep_{\bd,C}(\mathbb{F}_{q^l})\;|\; M\;\text{is\;abs.\;indec}\}/\sim|=A_{\bd,C}(q^l).$$
By Lemma~\ref{L:Katz} above we deduce that $A_{\bd,C}(x) \prod_{i}\prod_{k=0}^{d_i-1} (x^{d_i}-x^k) \in \Z[x]$.
From the facts that $A_{\bd,C}(x) \in \mathbb{Q}[x]$ and $ \prod_{i}\prod_{k=0}^{d_i-1} (x^{d_i}-x^k)$ is monic
we deduce that $A_{\bd,C}(x) \in \Z[x]$ as wanted. Theorem~\ref{T:main2} is proved.

\vspace{.2in}

\section{Volume of the stacks $\mathbf{Rep}_{\mathbf{d}}C$.}

\vspace{.1in}

The aim of this short section is to show, using the same kind of arguments as above, that the volumes of the stacks
of representations of (almost, concealed) canonical algebras are also given by some rational function in the size of the finite field.

\vspace{.1in}

\paragraph{\textbf{6.1.}} Let us denote by $\textbf{Rep\,}C_T$ the stack parametrizing (finite-dimensional) representations of $C$. This stack can be presented as a disjoint union
$$\textbf{Rep}\,C_T=\bigsqcup_{\bd} \textbf{Rep}_{\bd}\,C_T, \qquad 
\textbf{Rep}_{\bd}\,C_T = [E_{\bd,C_T}/ G_{\bd}]$$
where $E_{\bd,C_T}$ is a certain affine algebraic variety determined from a presentation of $C_T$ as the path algebra of a quiver $Q$ with relations, and where $\bd$ runs among all dimension vectors of $Q$. Recall that $K_0(\mathcal{T})^{+}$, resp. $K_0(\mathcal{F}[1])^+$ is the submonoid of  $K_0(C_T)^+$ generated by the classes of objects in $\mathcal{T}$, resp. $\mathcal{F}[1]$. There is a partition
$$\textbf{Rep}_{\bd}\,C_T = \bigsqcup_{\bd_1, \bd_2} \textbf{Rep}_{\bd_1,\bd_2}\,C_T$$
where the union runs over all pairs $(\bd_1, \bd_2)$ such that  $\bd_1 \in K_0(\mathcal{F}[1])^+, \bd_2 \in K_0(\mathcal{T})^+$ and $\bd_1+\bd_2=\bd$, and where $\textbf{Rep}_{\bd_1, \bd_2}\,C_{T}$ is the locally closed substack of $\textbf{Rep}_{\bd}\,C_T$ parametrizing modules $M$ whose canonical submodule $M_1 \in \mathcal{F}[1]$ is of dimension $\bd_1$. By Theorem~\ref{T: indecan} iii), the canonical morphism
$$ \textbf{Rep}_{\bd_1,\bd_2}\,C_T \to  \textbf{Rep}_{\bd_1,0}\,C_T \times  \textbf{Rep}_{0,\bd_2}\,C_T, \qquad M \mapsto (M^{(1)}, M^{(2)})$$
is a stack bundle of dimension $-\langle \bd_2, \bd_1\rangle$, see e.g. \cite[Sect. ~5]{MozSchiff}. It follows that, when $k=\mathbb{F}_q$ is a finite field,
$$vol(\textbf{Rep}_{\bd}\,C_T)=\sum_{\bd_1, \bd_2} q^{-\langle \bd_2,\bd_1\rangle} vol(\textbf{Rep}_{\bd_1,0}\,C_T) vol(\textbf{Rep}_{0,\bd_2}\,C_T)$$
where the sum ranges over the same set of pairs $(\bd_1,\bd_2)$ as above. Note that this sum is finite.

\vspace{.1in}

\paragraph{\textbf{6.2.}} The substacks $\textbf{Rep}_{0,\bd_2}\,C_T$ and $\textbf{Rep}_{\bd_1,0}\,C_T$ are respectively open in $\textbf{Rep}_{\bd_2}\,C_T$ and  in $\textbf{Rep}_{\bd_1}\,C_T$. Moreover, the derived
equivalence $D^b(Coh(\mathbb{X})) \stackrel{\sim}{\to} D^b(mod(C_T))$ induces isomorphism of stacks
$$\textbf{Coh}^{\mathcal{T}}_{\psi^{-1}(\bd_2)}(\mathbb{X}) \simeq \textbf{Rep}_{0,\bd_2}\,C_T, \qquad
\textbf{Coh}^{\mathcal{F}}_{\psi^{-1}(-\bd_1)}(\mathbb{X}) \simeq \textbf{Rep}_{\bd_1,0}\,C_T$$
and therefore
$$vol(\textbf{Rep}_{\bd_1,0}\,C_T)=vol(\textbf{Coh}^{\mathcal{F}}_{\psi^{-1}(-\bd_1)}(\mathbb{X}))=\langle \mathbf{1}^{\mathcal{F}}_{\psi^{-1}(-\bd_1)} \,|\, \mathbf{1}^{\mathcal{F}}_{\psi^{-1}(-\bd_1)}\rangle, $$
$$vol(\textbf{Rep}_{0,\bd_2}\,C_T)=vol(\textbf{Coh}^{\mathcal{T}}_{\psi^{-1}(\bd_2)}(\mathbb{X}))=\langle \mathbf{1}^{\mathcal{T}}_{\psi^{-1}(\bd_2)} \,|\, \mathbf{1}^{\mathcal{T}}_{\psi^{-1}(\bd_2)}\rangle.$$
By Proposition~\ref{T:1}, the elements $\mathbf{1}^{\mathcal{T}}_{\gamma}, \mathbf{1}^{\mathcal{F}}_{\gamma}$ admit lifts
${}_R\mathbf{1}^{\mathcal{T}}_{\gamma}, {}_R\mathbf{1}^{\mathcal{F}}_{\gamma}$ to the integral form of the spherical
Hall algebra ${}_R\mathbf{H}^{sph}_{\underline{p}}$. We deduce that there exists for any $\gamma \in K_0$ elements $vol^{\mathcal{T}}_{\gamma}, vol^{\mathcal{F}}_{\gamma} \in R_{loc}$ such that for any finite field $\mathbb{F}_q$, for any weighted projective line $\mathbb{X}$ defined over $\mathbb{F}_q$ of type $\up$ we have
$$vol(\textbf{Coh}^{\mathcal{T}}_{\gamma}(\mathbb{X}))=vol^{\mathcal{T}}_{\gamma}(q), \qquad 
vol(\textbf{Coh}^{\mathcal{F}}_{\gamma}(\mathbb{X}))=vol^{\mathcal{F}}_{\gamma}(q).$$
The following result is an easy consequence of the above considerations~:

\vspace{.05in}

\begin{theo} For any combinatorial type $\mathbf{c}=(\up, N_1, N_2)$, for any $\mathbf{d} \in N_{\up}$ there exists an element $V_{\mathbf{c},\bd} \in R_{loc}$ such that for any almost concealed canonical algebra of combinatorial type $\mathbf{c}$ defined over a finite field $k$ we have $vol(\textbf{Rep}_{\bd}\,C)=V_{\mathbf{c},\bd}(|k|)$.
\end{theo}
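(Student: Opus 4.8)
The plan is to unwind the structural decomposition of $\textbf{Rep}_{\bd}\,C$ established in paragraphs \textbf{6.1} and \textbf{6.2} and observe that every piece has been shown to be ``motivic'' in the sense of being the specialization of an element of $R_{loc}$. First I would fix the combinatorial type $\mathbf{c}=(\up, N_1, N_2)$, where $N_1=\psi(K_0^+(\cF[1]))$ and $N_2=\psi(K_0^+(\cT))$, and recall that, by Theorem~\ref{T: indecan} iii), for a given $\bd \in N_{\up}$ there are only finitely many pairs $(\bd_1,\bd_2)$ with $\bd_1\in N_1$, $\bd_2\in N_2$ and $\bd_1+\bd_2=\bd$ (finiteness follows since both $N_1$ and $N_2$ lie in $K_0^+$, which together with Lemma~\ref{L:finitesum} bounds the decompositions; alternatively it is immediate from dimension-vector considerations). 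The formula from \textbf{6.1} then reads, over any finite field $k$ with $|k|=q$,
$$vol(\textbf{Rep}_{\bd}\,C)=\sum_{\bd_1+\bd_2=\bd} q^{-\langle \bd_2,\bd_1\rangle}\, vol(\textbf{Rep}_{\bd_1,0}\,C)\, vol(\textbf{Rep}_{0,\bd_2}\,C),$$
a finite sum whose shape depends only on $\mathbf{c}$ and $\bd$.

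Next I would import the identifications of \textbf{6.2}: the derived equivalence $D^b(Coh(\mathbb{X}))\xrightarrow{\sim}D^b(mod(C))$ gives isomorphisms of stacks $\textbf{Rep}_{0,\bd_2}\,C\simeq \textbf{Coh}^{\cT}_{\psi^{-1}(\bd_2)}(\mathbb{X})$ and $\textbf{Rep}_{\bd_1,0}\,C\simeq \textbf{Coh}^{\cF}_{\psi^{-1}(-\bd_1)}(\mathbb{X})$, so that the corresponding volumes equal the Green pairings $\langle \mathbf{1}^{\cT}_{\psi^{-1}(\bd_2)}\,|\,\mathbf{1}^{\cT}_{\psi^{-1}(\bd_2)}\rangle$ and $\langle \mathbf{1}^{\cF}_{\psi^{-1}(-\bd_1)}\,|\,\mathbf{1}^{\cF}_{\psi^{-1}(-\bd_1)}\rangle$ respectively. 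By Proposition~\ref{T:1} each of $\mathbf{1}^{\cT}_{\gamma}$ and $\mathbf{1}^{\cF}_{\gamma}$ is the image under $ev_k$ of an element ${}_R\mathbf{1}^{\cT}_{\gamma}$, resp. ${}_R\mathbf{1}^{\cF}_{\gamma}$, of the generic spherical Hall algebra ${}_R\mathbf{H}^{sph}_{\up}$; and by Theorem~\ref{T:Lin} the Green pairing is the specialization at $t=q^{-1/2}$ of the $R$-bilinear Hopf pairing $(\;|\;):{}_R\mathbf{H}^{sph}_{\up}\otimes {}_R\mathbf{H}^{sph}_{\up}\to R_{loc}$. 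Hence setting
$$vol^{\cT}_{\gamma}:=\big({}_R\mathbf{1}^{\cT}_{\gamma}\;\big|\;{}_R\mathbf{1}^{\cT}_{\gamma}\big)\in R_{loc},\qquad vol^{\cF}_{\gamma}:=\big({}_R\mathbf{1}^{\cF}_{\gamma}\;\big|\;{}_R\mathbf{1}^{\cF}_{\gamma}\big)\in R_{loc},$$
we obtain elements of $R_{loc}$ that specialize to the two volumes above for every finite field and every weighted projective line of type $\up$, exactly as asserted at the end of \textbf{6.2}.

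Finally I would assemble these into the candidate element. Define
$$V_{\mathbf{c},\bd}:=\sum_{\substack{\bd_1\in N_1,\ \bd_2\in N_2\\ \bd_1+\bd_2=\bd}} t^{2\langle \bd_2,\bd_1\rangle}\, vol^{\cF}_{\psi^{-1}(-\bd_1)}\, vol^{\cT}_{\psi^{-1}(\bd_2)}\ \in R_{loc},$$
where I use that $R=\C[t,t^{-1}]$ with $t=q^{-1/2}$, so that $q^{-\langle\bd_2,\bd_1\rangle}=t^{2\langle\bd_2,\bd_1\rangle}$; since $R_{loc}$ is a commutative ring containing $t^{\pm1}$, this is a legitimate element. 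Applying $ev_k$ (evaluation at $t=q^{-1/2}$, equivalently writing everything as a rational function of $q$) to the displayed formula of \textbf{6.1} and comparing term by term shows $vol(\textbf{Rep}_{\bd}\,C)=V_{\mathbf{c},\bd}(|k|)$ for every almost concealed-canonical algebra $C$ of combinatorial type $\mathbf{c}$ over a finite field $k$. The one point requiring a little care — and the only real obstacle — is making sure everything depends solely on the combinatorial type $\mathbf{c}$ and not on the chosen algebra $C$ or the auxiliary line $\mathbb{X}$: the index set $\{(\bd_1,\bd_2)\}$ depends only on $(N_1,N_2)$; the integers $\langle\bd_2,\bd_1\rangle$ are computed in $(N_{\up},\langle,\rangle)$ which depends only on $\up$; and the lifts ${}_R\mathbf{1}^{\cT}_{\gamma},{}_R\mathbf{1}^{\cF}_{\gamma}$ depend only on $(\up,K_0^+(\cF),K_0^+(\cT))$ by the Remark following Proposition~\ref{T:1}. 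Invoking that Remark closes the argument.
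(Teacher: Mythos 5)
Your proposal is correct and follows essentially the same route as the paper: it simply makes explicit the assembly of the decomposition of paragraph \textbf{6.1}, the stack identifications and Green-pairing formulas of paragraph \textbf{6.2}, the integral lifts from Proposition~\ref{T:1} together with the $R_{loc}$-valued pairing of Theorem~\ref{T:Lin}, and the Remark after Proposition~\ref{T:1} for dependence only on the combinatorial type --- exactly the ``easy consequence of the above considerations'' the paper intends (your attribution of the finiteness of the index set to Theorem~\ref{T: indecan}~iii) is beside the point, but your parenthetical dimension-vector argument is the right one).
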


\vspace{.1in}

%
%
%
%
%

\vspace{.2in}

\noindent
Laboratoire de Math\'ematiques d'Orsay, Univ. Paris-Sud, CNRS,\\
 Universit\'e Paris-Saclay, 91405 Orsay, France.\\
email~: \texttt{olivier.schiffmann@math.u-psud.fr}, \texttt{pierre-guy.plamondon@math.u-psud.fr}

\end{document}